\newtheorem{thm}{Theorem}[section]
\newtheorem{lem}[thm]{Lemma}
\newtheorem{coro}[thm]{Corollary}
\newtheorem{prop}[thm]{Proposition}
\newtheorem{defn}[thm]{Definition}
\numberwithin{equation}{thm}%
\begin{document}

\title{Duplex Hecke Algebras and Related Quantum Schur Duality}
\author{Chenliang Xue, An Zhang}

\keywords{quantum group, duplex Hecke algebra, quantum q-Schur duality, double centralizer.}
\thanks	{This work is supported by National Natural Science Foundation of China (NSFC) Gr:  12071136.}
\subjclass[2020]{20C08, 20G42, 20G43}

\address{School of Mathematical Sciences, East China Normal University, Shanghai 200241, China}
\email{52215500003@stu.ecnu.edu.cn}
\address{School of Mathematical Sciences, East China Normal University, Shanghai, 200241, China.} 
\email{845684219@qq.com}

\begin{abstract}
We introduce the duplex Hecke algebra, which is an infinite dimensional algebra generated by two Hecke algebras. This concept originates from the degenerate duplex Hecke algebra in the theory of Schur-Weyl duality related to enhanced reductive algebraic groups. We will study the finite dimensional natural representation of the duplex Hecke algebra on tensor space and prove that the duplex Hecke algebra forms a duality with the quantum group of Levi type.
\end{abstract}
\maketitle
\section{Introduction}

An algebraic group $G$ is called a semi-reductive group if $G$ is a semi-direct product of a reductive closed subgroup $G_0$ and the unipotent radical $U$. When the underground field is of characteristic $p>0$, the study of semi-reductive algebraic groups and their Lie algebras is very important in many aspects of the representation theory (see \cite{OSY} or \cite{SXY}).

 Let $G={\rm GL}(V)$ and $\nu$ be the natural representation on $V$. Let $\underline{V}$ be a one-dimensional extension of $V$. Then we have a typical enhanced reductive algebraic group $\underline{G}=G\times_\nu V $, which is a closed subgroup of ${\rm GL}(\underline{V})$. The enhanced reductive group $\underline{G}$ is naturally a semi-reductive group. 
 By the classical Schur-Weyl duality, the study of polynomial representations of general linear groups produces Schur algebras. By analogy of this, the tensor representations of an enhanced group $\underline{G}$ naturally produce the so-called enhanced Schur algebra $\mathcal{E}(n,r)$, which is the algebra generated by the image of $\underline{G}$ in the $r$-{th} tensor representation of $\underline{V}^{\otimes r}$. 
 In order to develop the representation of enhanced Schur algebras and to investigate dualities of invariant groups and algebras in the enhanced tensor representations, the degenerate duplex Hecke algebra, denoted by $\mathcal{H}_r$, is introduced in \cite{SXY}.

The following results about degenerate duplex Hecke algebras are established in \cite{SXY}:
\[{\rm End}_{\mathbb{C}\Phi({\rm GL}_n\times\boldsymbol{{\mathbb{G}}_{\rm m}})}(\underline{V}^{\otimes r})=\Xi(\mathcal{H}_r),\]
\[{\rm End}_{\Xi(\mathcal{H}_r)}(\underline{V}^{\otimes r})=\mathbb{C}\Phi({\rm GL}_n\times\boldsymbol{{\mathbb{G}}_{\rm m}}),\]
where $\Phi:{\rm GL}_n\times\boldsymbol{{\mathbb{G}}_{\rm m}}\rightarrow {\rm GL}(\underline{V}^{\otimes r})$ and $\Xi:\mathcal{H}_r\rightarrow {\rm End}(\underline{V}^{\otimes r})$ are natural representations.
This result is called Levi Schur-Weyl duality. The quantum $q$-Schur duality is the $q$-deformation of classical Schur-Weyl duality. A natural question is the $q$-deformation of Levi Schur-Weyl duality, which is the purpose of this article.

The article is divided into three parts. In the first part, we introduce some basic concepts, especially quantum group $\mathbf{U}_q(\mathfrak{gl}_n)$ and Hecke algebra $\mathbf{H}(\mathfrak{S}_r)$. We describe their natural representations on $V^{\otimes r}$, where $V$ is an $n$-dimensional vector space. 
Furthermore, we recall the quantum $q$-Schur duality given in \cite{Jim86} and \cite{LW}. 
In the second part, we define the duplex Hecke algebra $\mathfrak{H}\kern -.6em\mathfrak{H}_r$. We give the representation of $\mathfrak{H}\kern -.6em\mathfrak{H}_r$ on $\underline{V}^{\otimes r}$ and prove it is well defined. 
The image of the representation is denoted  by $\mathcal{D}(n,r)$. 
In the third part, we define Levi quantum group $L_q(\mathfrak{gl}_{n+1})$ and prove Levi quantum group $L_q(\mathfrak{gl}_{n+1})$ and duplex Hecke algebra $\mathfrak{H}\kern -.6em\mathfrak{H}_r$ form double centralizes:
$${\rm End}_ {\mathcal{D}(n,r)}(\underline{V}^{\otimes r})=\Phi(L_q(\mathfrak{gl}_{n+1})),$$
$${\rm End}_{L_q(\mathfrak{gl}_{n+1})}(\underline{V}^{\otimes r})=\mathcal{D}(n,r).$$

In this article, $q$ is transcendental over $\mathbb{Q}$.

\section{Preliminaries}
We recall some fundamental results on the quantum $q$-Schur duality.
\begin{defn}\label{quantum group}
	The quantum group $\mathbf{U}_q(\mathfrak{sl}_n)$ is the associative algebra generated by $E_i,F_i,K_i,K_i^{-1} (1\leqslant i\leqslant n-1)$ over $\mathbb{Q}(q)$, which satisfies the following relations:
	\begin{equation}
		K_iK_i^{-1}=1=K_i^{-1}K_i;\quad
		K_iK_j=K_jK_i;
	\end{equation}
	\begin{align}
		K_iE_jK_i^{-1}=q^{c_{ij}}E_j;\quad K_iF_jK_i^{-1}=q^{-c_{ij}}F_j;
	\end{align}
	\begin{align}\label{sl1}
		E_iF_i-F_iE_i=\delta_{ij}\frac{K_i-K_i^{-1}}{q-q^{-1}};
	\end{align}
	\begin{align}
		\begin{split}
			\left.
			\begin{array}{ll}
				E_iE_j=E_jE_i\\
				F_iF_j=F_jF_i\\
			\end{array}
			\right\} \mbox{ if $c_{ij}=0$};
		\end{split}
	\end{align}
	\begin{align}\label{sl4}
		\begin{split}
			\left.
			\begin{array}{ll}
				E_i^2E_j-(q+q^{-1})E_iE_jE_i+E_jE_i^2=0\\
				F_i^2F_j-(q+q^{-1})F_iF_jF_i+F_jF_i^2=0\\
			\end{array}
			\right\} \mbox{ if $c_{ij}=-1$}.
		\end{split}
	\end{align}
	where $C=(c_{ij})_{(n-1)\times(n-1)}$ is the Cartan matrix of $\mathfrak{sl}_n$ .
\end{defn}

\begin{defn}\label{gl}
	The quantum group $\mathbf{U}_q(\mathfrak{gl}_n)$ is the associative algebra generated by $E_i,F_i,H_\alpha,H_\alpha^{-1} (1\leqslant i\leqslant n-1,1\leqslant\alpha\leqslant n)$ over $\mathbb{Q}(q)$, which satisfies the  relations (\ref{sl1}) - (\ref{sl4}) and the following relations:
	\begin{align}
		H_iH_i^{-1}=H_i^{-1}H_i=1,\quad
		H_iH_j=H_jH_i;
	\end{align}
	\begin{align}
		H_iE_iH_i^ {-1}=qE_i,\quad	H_iF_iH_i^ {-1}=q^{-1}F_i;
	\end{align}
	\begin{align}
		H_{i+1}E_iH_{i+1}^{-1}=q^{-1}E_i,\quad	H_{i+1}F_iH_{i+1}^{-1}=qF_i;
	\end{align}
	\begin{align}
		H_kE_iH_k^{-1}=E_i (k\neq i,i+1),\quad	H_kF_iH_k^{-1}=F_i (k\neq i,i+1).
	\end{align}
	\end{defn}

We have a natural embedding from $\mathbf{U}_q(\mathfrak{sl}_n)$ to $\mathbf{U}_q(\mathfrak{gl}_n)$ by $\mathbf{U}_q(\mathfrak{sl}_n)\rightarrow \mathbf{U}_q(\mathfrak{gl}_n):E_i\mapsto E_i,F_i\mapsto F_i,K_i\mapsto H_iH_{i+1}^{-1}$. Then we can regard $K_i,K_i^{-1} (1\leqslant i\leqslant n-1)$ as the elements in $\mathbf{U}_q(\mathfrak{gl}_n)$.
What's more, the quantum group $\mathbf{U}=\mathbf{U}_q(\mathfrak{gl}_n)$ is a Hopf algebra over $\mathbb{Q}(q)$
 with comultiplication $\Delta:\mathbf{U}\rightarrow \mathbf{U}\otimes\mathbf{U}$ such that
	$$H_i\mapsto H_i\otimes H_i;$$
	$$E_i\mapsto 1\otimes E_i+E_i\otimes K_i^{-1};$$
	$$F_i\mapsto F_i\otimes 1+K_i\otimes F_i.$$
For $\mathbf{U}$-modules $M$ and $N$, the tensor product $M\otimes N$ is again a $\mathbf{U}$-module by $u.(m\otimes n)=\Delta(u)(m\otimes n)=\sum u_1m\otimes u_2n\mbox{, where }\Delta(u)=\sum u_1\otimes u_2$.
Let $V$ be an $n$-dimensional vector space over $\mathbb{Q}(q)$. Let $\eta_i $ $(i=1,2,\cdots,n)$ be a basis of $V$. Note that $V$ is a natural module of $\mathbf{U}$ under the following actions:

	$$H_i\eta_i=q\eta_i,\quad H_i\eta_j=\eta_j\;\;(j\neq i);$$
     $$F_i\eta_i=\eta_{i+1},\quad F_i\eta_j=0\;\;(j\neq i);$$
	$$E_i\eta_{i+1}=\eta_i,\quad E_i\eta_j=0\;\;(j\neq i+1).$$

Obviously, $V^{\otimes r}$ is also a $\mathbf{U}$-module. For $u\in \mathbf{U}$, the action is under $\Delta^r\triangleq (\Delta\otimes 1^{r-2})\circ\cdots\circ(\Delta\otimes1)\circ\Delta(u)$, where 1 means the identity. We have
	$$\Delta^r(H_i)=\overbrace{H_i\otimes\cdots\otimes H_i}^{r};$$
	$$\Delta^r(E_i)=\sum\limits_{j=1}^{r}\overbrace{1\otimes\cdots\otimes1}^{j-1}\otimes{E_i}\otimes\overbrace{K_i^{-1}\otimes\cdots\otimes K_i^{-1}}^{r-j};$$
	$$\Delta^r(F_i)=\sum\limits_{j=1}^{r}\overbrace{K_i\otimes\cdots\otimes K_i}^{r-j}\otimes{F_i}\otimes\overbrace{1\otimes\cdots\otimes1}^{j-1}.$$

\begin{defn}
	The Hecke algebra $\mathbf{H}=\mathbf{H}_q(\mathfrak{S}_r) (\mathfrak{S}_r$ is the symmetric group generated by $s_i=(i,i+1)$ with $i=1,2,\cdots r-1)$ is a $\mathbb{Q}(q)$-algebra with unit $1$, generated by $T_{s_i}$ $(1\leq i\leq r-1)$ subject to the following relations:
		$$(T_{s_i}+q)(T_{s_i}-q^{-1})=0;$$
		$$T_{s_i}T_{s_{i+1}}T_{s_i}=T_{s_{i+1}}T_{s_i}T_{s_{i+1}};$$
		$$T_{s_i}T_{s_j}=T_{s_j}T_{s_i},(|i-j|>1).$$
	Define $T_\sigma=T_{i_1}\cdots T_{i_k}$, for any reduced expression $\sigma=s_{i_1}\cdots s_{i_k}\in\mathfrak{S}_r$. (It is independent of choices of reduced expression.)
\end{defn}

We denote $\mathbf{H}_q(\mathfrak{S}_r)$ simply by $\mathbf{H}(\mathfrak{S}_r)$.
A right action on a basis $\{M_f=\eta_{f(1)}\otimes \eta_{f(2)}\otimes\cdots\otimes \eta_{f(r)}\mid f:\{1,2\cdots ,r\}\mapsto\{1,2\cdots ,n\}\}$ of $V^{\otimes r}$  is given by:
\begin{align*}
	\begin{split}
		M_fT_{s_i}=\left\{
		\begin{array}{ll}
			M_{f{s_i}},&  f(i)<f(i+1),\\
			M_{f{s_i}}+(q^{-1}-q)M_f,&  f(i)>f(i+1),\\
			q^{-1}M_f,&  f(i)=f(i+1).
		\end{array}
		\right .
	\end{split}
\end{align*}
where $f{s_i}$ is defined by $(f{s_i})(j)=f(s_i(j))$.

The left action of $\mathbf{U}$ and the right action of $\mathbf{H}$ on $V^{\otimes r}$ are denoted by $\Phi$ and $\Psi$ respectively.
\begin{defn}
Suppose $r$ and $n$ are two positive integers.
	\begin{itemize}
	\item[(1)] 
    Let $\Lambda(n,r)=\{(\lambda_1,\dots,\lambda_n)\in\mathbb{N}^n\mid \sum\limits_{i=1}^n\lambda_i=r\}$ be the set of compositions of $r$ into $n$ parts.
	\item[(2)] 
	Let
	$P(n,r)=\{(\lambda_1,\lambda_2\cdots\lambda_n)\in\mathbb{N}^n\mid\sum\limits_{i=1}^n\lambda_i=r,\lambda_1\geq\cdots\geq\lambda_n\geq 0\}$
	be the set of partitions of $r$ of length not exceeding $n$.
\end{itemize}	 
\end{defn}

\begin{thm}\label{q duality} (quantum q-Schur duality)
	\begin{itemize}
	\item[(1)] The actions of $\mathbf{U}$ and $\mathbf{H}$ commute and they form double centralizers, i.e:
	$$\mathbf{U}=\mathbf{U}_q(\mathfrak{gl}_n)\overset{\Phi}{\rightarrow}V^{\otimes r}\overset{\Psi}{\leftarrow}\mathbf{H}_q(\mathfrak{S}_r)=\mathbf{H},$$
	$${\rm End}_\mathbf{H}(V^{\otimes r})=\Phi(\mathbf{U}),$$
	$${\rm End}_\mathbf{U}(V^{\otimes r})=\Psi(\mathbf{H}).$$

	\item[(2)] As a $(\mathbf{U},\mathbf{H})$-module, 
	$$V^{\otimes r}\simeq\bigoplus\limits_{\lambda\in P(n,r)} L_q(\lambda)\otimes S_q(\lambda),$$ where $L_q(\lambda)$ is the irreducible $\mathbf{U}$-module with highest weight $\lambda$ and $S_q^\lambda$ is the irreducible $\mathbf{H}$-module called the Specht module (see \cite{DDPW} or \cite{JAN95} for details). 
\end{itemize}

\end{thm}

This theorem is due to Jimbo (see \cite{Jim86}), which can be proved by $R$-Matrix. Furthermore, there are different approaches to the $q$-Schur duality using the $q$-Schur algebra developed in \cite{DJ89} and \cite{BLM}.

\begin{defn}
	Let $\lambda=(\lambda_1,\cdots,\lambda_m)\in\Lambda(m,n)$. For $1\leq i\leq m$, let
	$$[a,b]:=[a,a+1,\cdots,b-1,b]  ,(a<b);$$
	$$R_i^{\lambda}=[\lambda_1+\cdots+\lambda_{i-1}+1,\lambda_1+\cdots+\lambda_i].$$
	
	If $\lambda_i=0$, then $R_i^{\lambda}:=\emptyset$, In this way, we get a decomposition of $[1,n]$ into a disjoint union of subsets:
	\[[1,n]=R_1^{\lambda}\cup R_2^{\lambda}\cup\cdots\cup R_m^{\lambda}.\]
	Let $\mathfrak{S}_{\lambda}$ be the subgroup of $\mathfrak{S}_n$ stabilizing this decomposition. More precisely, define
	\[\mathfrak{S}_\lambda:=\{w\in \mathfrak{S}_n\mid wR_i^{\lambda}=R_i^{\lambda}, 1\leq i\leq m\}.\]
	We call $\mathfrak{S}_\lambda$ the Young subgroup of $\mathfrak{S}_n$ defined by the composition $\lambda$.
\end{defn}
From the above definition, $\prescript{\lambda}{}{\mathfrak{S}}$ is the set of shortest left coset representatives of $\mathfrak{S}_\lambda$ in $\mathfrak{S}_n$. Let $\prescript{\lambda}{}{\mathfrak{S}}^{\mu}$ be the set of shortest $(\mathfrak{S}_\lambda,\mathfrak{S}_\mu)$-double coset representatives. These notations will be used in section 4.

\section{Doubled Hecke algebras}
In this section, we will introduce Doubled Hecke algebra. Let $\mathscr{A}=\mathbb{Q}(q)$.  
For given positive integers $r$ and $l$ with $r>l$, we consider the following $\mathscr{A}$-algebra $\mathfrak{H}\kern -.6em\mathfrak{H}^l_r$ with unit 1, defined by generators $\{{\rm T}_{s_i}\mid i=1,2,\cdots,r-1\}\cup\{x_\sigma\mid \sigma\in \mathfrak{S}_l\}$ subject to the following relations:
\begin{align}\label{qha1}
	({\rm T}_{s_i}+q)({\rm T}_{s_i}-q^{-1})=0\;\;(0< i\leq r-1);
\end{align}
\begin{align}\label{qha2}
	{\rm T}_{s_i}{\rm T}_{s_j}={\rm T}_{s_j}{\rm T}_{s_i}\;\;(0< i\ne j\leq r-1,|i-j|>1);
\end{align}
\begin{align}\label{qha3}
	{\rm T}_{s_i}{\rm T}_{s_j}{\rm T}_{s_i}={\rm T}_{s_j}{\rm T}_{s_i}{\rm T}_{s_j}\;\;(0< i\ne j\leq r-1,|i-j|=1);
\end{align}
\begin{align}\label{qha4}
	\begin{split}
		x_\sigma x_{s_i}=\left\{
		\begin{array}{ll}
			x_{\sigma s_i},&  l(\sigma s_i)=l(\sigma)+1,\\
			x_{\sigma s_i}+(q^{-1}-q)x_\sigma,&  l(\sigma s_i)=l(\sigma)-1;
		\end{array}
		\right.
	\end{split}
\end{align}
\begin{align}\label{qha5}
	\begin{split}
		x_{s_i} x_\sigma=\left\{
		\begin{array}{ll}
			x_{s_i\sigma},&  l(s_i\sigma)=l(\sigma)+1,\\
			x_{s_i\sigma}+(q^{-1}-q)x_\sigma,&  l(s_i\sigma)=l(\sigma)-1;
		\end{array}
		\right.
	\end{split}
\end{align}
\begin{align}\label{qha6}
	\begin{split}
		{\rm T}_{s_i} x_\sigma=\left\{
		\begin{array}{ll}
			x_{s_i\sigma},&  l(s_i\sigma)=l(\sigma)+1, i<l,\\
			x_{s_i\sigma}+(q^{-1}-q)x_\sigma,&  l(s_i\sigma)=l(\sigma)-1, i<l;
		\end{array}
		\right.
	\end{split}
\end{align}
\begin{align}\label{qha7}
	\begin{split}
		x_\sigma {\rm T}_{s_i}=\left\{
		\begin{array}{ll}
			x_{\sigma s_i},&  l(\sigma s_i)=l(\sigma)+1, i<l,\\
			x_{\sigma s_i}+(q^{-1}-q)x_\sigma,&  l(\sigma s_i)=l(\sigma)-1, i<l;
		\end{array}
		\right.
	\end{split}
\end{align}
\begin{align}\label{qha8}
	{\rm T}_{s_i}x_\sigma=q^{-1}x_\sigma=x_\sigma {\rm T}_{s_i},  i>l.
\end{align}
This is an infinite-dimensional associative algebra. We call $\mathfrak{H}\kern -.6em\mathfrak{H}^l_r$ the $l$-{th} duplex Hecke algebra of $\mathbf{H}(\mathfrak{S}_r)$. By definition, the subalgebra generated by $\{{\rm T}_{s_i}\mid i=1,2,\cdots,r-1\}$ is isomorphic to $\mathbf{H}(\mathfrak{S}_r)$.

Furthermore, we make an appointment that $\mathfrak{H}\kern -.6em\mathfrak{H}^0_r:=\mathbf{H}(\mathfrak{S}_r)$ and $\mathfrak{H}\kern -.6em\mathfrak{H}^r_r:=\langle {\rm T}_{s_i},x_\sigma\rangle$ with all ${\rm T}_{s_i}, x_\sigma$, $i=1,\ldots,r-1$ and $\sigma\in \mathfrak{S}_r$ satisfying (\ref{qha1}) - (\ref{qha7}). Now we combine all $\mathfrak{H}\kern -.6em\mathfrak{H}_r^l (l=0,1,\cdots,r)$.

\begin{defn}
	The duplex Hecke algebra $\mathfrak{H}\kern -.6em\mathfrak{H}_r$ of $\mathbf{H}(\mathfrak{S}_r)$ is a $\mathbb{Q}(q)$-associative algebra with unit $1$ and generators ${\rm T}_{s_i}$ $(i=1,\cdots,r-1)$, $x^{(l)}_\sigma$ for $\sigma\in\mathfrak{S}_l$ $(l=1,2,\cdots,r)$, and with relations as (\ref{qha1}) - (\ref{qha8}) in which $x_\sigma,x_{s_i}$ are replaced by $x_\sigma^{(l)},x_{s_i}^{(l)}$ and addition ones:
	\begin{align}\label{qha9}
		x_\sigma^{(l)}x_\gamma^{(k)}=0 ,\mbox{ for }\sigma\in\mathfrak{S}_l,\gamma\in\mathfrak{S}_k,k\neq l.
	\end{align}
\end{defn}
Naturally, $\mathbf{H}(\mathfrak{S}_r)$ is a subalgebra of $\mathfrak{H}\kern -.6em\mathfrak{H}_r$.

Let $\underline{V}$ be a one-dimesional extension of $V$ with enhanced vector  $\eta_{n+1}$. 
Then $\boldsymbol{\eta_i}=\eta_ {i_1}\otimes\cdots\otimes\eta_{i_r}$, where  $\boldsymbol{i}=(i_1,\cdots,i_r)\in\mathcal{N}^r$
 $(\mathcal{N}:=\{1,2,\cdots,n+1\})$, form a basis of $\underline{V}^{\otimes r}$. For a given $\boldsymbol{j}=(j_1,\cdots,j_r)\in\mathcal{N}^r$, we call 
 the number of elements $j_i$ which is not equal to $n+1$ the $\underline{n}$-rank of $\boldsymbol{j}$, denoted by $rk_{\underline{n}}(\boldsymbol{j})$.
  All vectors with $\underline{n}$-rank equal to $l$ form a set $\mathcal{N}^r_l$ and it is easy to check $\mathcal{N}^r=\bigcup_{l=0}^r \mathcal{N}^r_l$.
Then $\underline{V}^{\otimes r}$ can be decomposed into the direct sum of $\mathscr{A}$-subspace: $\underline{V}^{\otimes r}=\bigoplus_{l=0}^r\underline{V}^{\otimes r}_l$, where 
$\underline{V}^{\otimes r}_l:=\sum_{\boldsymbol{i}\in \mathcal{N}^r_l}\mathscr{A}\boldsymbol{\eta_{i}}$, $l=0,1,\cdots,r$.
    Naturally, each $\underline{V}^{\otimes r}_l$ is stable under the natural action of $\mathbf{H}(\mathfrak{S}_r)$, i.e. 
     $\mathbf{H}(\mathfrak{S}_r).\underline{V}^{\otimes r}_l\subseteq\underline{V}^{\otimes r}_l$. We can get a representation of $\mathbf{H}(\mathfrak{S}_r)$ over $\underline{V}^{\otimes r}_l$, denoted by $\Psi|_l$.

   Consider $I=\{i_1,i_2,\cdots,i_l\} (i_1<i_2\cdots<i_l,1\leq i_j\leq r,i_j\in\mathbb{Z},1\leq j \leq l)$. 
   We denote $\underline{V}_I^{\otimes r}$ as the subspace spanned by $\{\eta_{j_1}\otimes\cdots\otimes\eta_{j_r}\mid 1\leq j_k\leq n,k\in I;j_k=n+1, k\notin I\}$.
   In particular, $\underline{V}^{\otimes r}_{\underline{l}}=V^{\otimes l}\otimes \eta_{n+1}^{\otimes r-l}$, where $\underline{l}=\{1,2,\cdots,l\}$.
 
 \begin{lem}
     Let $I$ be the set above. For any $\boldsymbol{\eta_j}\in\underline{V}^{\otimes r}_I$, there exist $\omega_I\in\mathbf{H}(\mathfrak{S}_r)$ and $\boldsymbol{\eta_{j'}}\in\underline{V}^{\otimes r}_{\underline{l}}$
     such that $\Psi(\omega_I)\boldsymbol{\eta_j}=\boldsymbol{\eta_{j'}}$, where $\Psi$ is the natural representation of $\mathbf{H}(\mathfrak{S}_r)$ on $\underline{V}^{\otimes r}$.
 \end{lem}
 \begin{proof}  
    Let
    $\boldsymbol{\eta_j}=\eta_{n+1}\otimes\cdots\otimes\eta_{j_1}\otimes\eta_{n+1}\otimes\cdots\otimes\eta_{j_l}\otimes\cdots\otimes\eta_{n+1}\in\underline{V}^{\otimes r}_I$, where the  ${i_m}$-{th} position of $\boldsymbol{\eta_j}$ is $\eta_{j_m}$ with $1\leq j_m\leq n$ and other positions of $\boldsymbol{\eta_j}$ is $\eta_{n+1}$. Let $\boldsymbol{\eta_{j'}}=\eta_{j_1}\otimes\eta_{j_2}\cdots\otimes\eta_{j_l}\otimes{\eta_{n+1}\otimes\cdots\otimes\eta_{n+1}}\in\underline{V}^{\otimes r}_{\underline{l}}$.       
    Notice $T_{s_i}\in\mathbf{H}(\mathfrak{S}_r)$ is invertible, it is enough to prove there exists $\omega_I'\in\mathbf{H}(\mathfrak{S}_r)$ such that $\Psi(\omega_I')\boldsymbol{\eta_{j'}}=\boldsymbol{\eta_{j}}$.
    
    We move the $l$-{th} element $\eta_{j_l}$ of $\boldsymbol{\eta_{j'}}$ to ${i_l}$-{th} position at first. Since $j_{l}<n+1$, we have  
  $$T_{s_l}.\boldsymbol{\eta_{j'}}=\eta_{j_1}\otimes\eta_{j_2}\otimes\cdots\otimes\eta_{j_{l-1}}\otimes\eta_{n+1}\otimes\eta_{j_l}\overbrace{\otimes{\eta_{n+1}\otimes\cdots\otimes\eta_{n+1}}}^{r-l-1}.$$
      Then we apply the actions of $T_{s_{l+1}},\cdots,T_{s_{i_{l}-1}}$ by turns and get an element $$\eta_{j_1}\otimes\eta_{j_2}\otimes\cdots\otimes\eta_{j_{l-1}}\otimes\eta_{n+1}\otimes\cdots\otimes{\eta_{j_l}}\overbrace{\otimes{\eta_{n+1}\otimes\cdots\otimes\eta_{n+1}}}^{r-i_l},$$
    where $\eta_{j_l}$ is at the $i_l$-th position.
    Similarly, we can move $\eta_{l-1}$ to $i_{l-1}$-th position, since $j_{l-1}<n+1$ and $i_{l-1}<i_l$. Repeating the process above, we finally get an element $\omega_I'\in\mathbf{H}(\mathfrak{S}_r)$ which transforms $\boldsymbol{\eta_j}$ to $\boldsymbol{\eta_{j'}}$ as desired.    
 \end{proof}
  
 We now consider the representation of $\mathfrak{H}\kern -.6em\mathfrak{H}_r$ on $\underline{V}^{\otimes r}$.
   There is a natural representation of $\mathbf{H}(\mathfrak{S}_l)$ on $V^{\otimes l}$, denoted by $\Psi^V_l$. It transforms $\eta_1\otimes\cdots\otimes \eta_l$ to $T_\sigma.(\eta_1\otimes\cdots\otimes \eta_l)$, where $\sigma\in \mathfrak{S}_l$. Recall the notation $\underline{V}^{\otimes r}_{\underline{l}}=V^{\otimes l}\otimes\eta_{n+1}^{\otimes r-l}$. We extend $\Psi^V_l$ and define linear operator as follow:
\begin{align*}
	\psi_\sigma=\Psi^V_l(T_\sigma)\otimes {\rm id}^{\otimes r-l}\in {\rm End}_{\mathscr{A}}(\underline{V}^{\otimes r}_{\underline{l}}).
\end{align*}
We can extend $\psi_\sigma$ to an element $\psi_\sigma^{\underline{l}}$ of ${\rm End}_{\mathscr{A}}(\underline{V}^{\otimes r}_l)$ by annihilating other $\underline{V}^{\otimes r}_I$ with $I\neq\underline{l}$. Note that $\mathbf{H}(\mathfrak{S}_r)$ is a subalgebra of  $\mathfrak{H}\kern -.6em\mathfrak{H}_r$. Therefore we can define the action of ${\rm T}_{s_i}\in \mathfrak{H}\kern -.6em\mathfrak{H}_r^l$ on ${\rm End}_\mathscr{A}(\underline{V}^{\otimes r}_l)$, which is compatible with the action of $\Psi|_l(T_{s_i})$.

In general, for $\boldsymbol{\eta_j}\in\underline{V}^{\otimes r}_I$ with $\#I=l$, we have $\boldsymbol{\eta_j}=\Psi(\omega_I^{-1})\boldsymbol{\eta_{j'}}$ for some $\boldsymbol{\eta_{j'}}\in\underline{V}^{\otimes r}_{\underline l} $. 
Then  $\Psi(\omega_I^{-1})\circ \psi_\sigma\circ\Psi(\omega_I)$ lies in ${\rm End}_\mathscr{A}(\underline{V}^{\otimes r}_I)$ for any $\sigma\in \mathfrak{S}_l$, which extend to an element of ${\rm End}_\mathscr{A}(\underline{V}^{\otimes r}_l)$ and denote it by $\psi_\sigma^I$. \par
Before defining a representation of $\mathfrak{H}\kern -.6em\mathfrak{H}_r$ on $\underline{V}^{\otimes r}$, we give a remark. Let $\Phi'$ be the natural representation of $\mathbf{U}_q(\mathfrak{gl}_{n})$ on ${V}^{\otimes r}$ and $\Phi$ be the natural representation of $\mathbf{U}_q(\mathfrak{gl}_{n+1})$ on $\underline{V}^{\otimes r}$. There is a natural embedding $\Phi'(\mathbf{U}_q(\mathfrak{gl}_n))\hookrightarrow \Phi(\mathbf{U}_q(\mathfrak{gl}_{n+1}))$, such that each element $g$ in $\mathbf{U}_q(\mathfrak{gl}_{n})$ acts as identity on  $\eta_{n+1}$. We still denote as $\Phi(g)$ for simplicity.

\begin{lem}\label{l rep of qha}
	The following statements hold.
	\begin{itemize}
		\item[(1)]For $1\leq l\leq r$, there is an algebra homomorphism $\Xi_l: \mathfrak{H}\kern -.6em\mathfrak{H}^l_r\rightarrow {\rm End}_\mathscr{A}(\underline{V}^{\otimes r}_l)$ defined by sending ${\rm T}_{s_i}\mapsto \Psi|_l(T_{s_i})$ and $x_\sigma\mapsto \psi_\sigma^{\underline{l}}$.
		\item[(2)]For $l=0$, there is an algebra homomorphism $\Xi_0: \mathfrak{H}\kern -.6em\mathfrak{H}^0_r\rightarrow {\rm End}_\mathscr{A}(\underline{V}^{\otimes r}_l)$ defined by sending ${\rm T}_{s_i}\mapsto \Psi|_l(T_{s_i})=q^{-1}{\rm id}$.		
		\item[(3)] For any $l\in\{0,1,\cdots,r\}$ and any $g\in\mathbf{U}_q(\mathfrak{gl}_n)\hookrightarrow \mathbf{U}_q(\mathfrak{gl}_{n+1})$, $\Phi(g)$ commutes with any elements from $\Xi_l(\mathfrak{H}\kern -.6em\mathfrak{H}^l_r)$ in ${\rm End}_\mathscr{A}(\underline{V}^{\otimes r}_l)$.		
		\item[(4)] On $\underline{V}^{\otimes r}$, there is a representation $\Xi$ of $\mathfrak{H}\kern -.6em\mathfrak{H}_r$ defined via:
		\begin{itemize}
			\item[(4.1)] $\Xi|_{{\rm T}_{s_i}}=\Psi$, where $\Xi|_{{\rm T}_{s_i}}$ means all action of ${\rm T}_{s_i}$ over $\Xi$;
			
			\item[(4.2)] For any $x_\sigma\in \mathfrak{H}\kern -.6em\mathfrak{H}^l_r$, $l=1,2,\cdots,r$, $\Xi(x_\sigma)|_{\underline{V}^{\otimes r}_l}=\Xi_l(x_\sigma)$ and $\Xi(x_\sigma)|_{\underline{V}^{\otimes r}_k}=0$ for $k\ne l$.
		\end{itemize}
	\end{itemize}
\end{lem}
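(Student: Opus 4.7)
The plan is to verify the defining relations of $\mathfrak{H}\kern -.6em\mathfrak{H}^l_r$ (and $\mathfrak{H}\kern -.6em\mathfrak{H}_r$ for part~(4)) one at a time under the prescribed assignments, exploiting the fact that on the canonical subspace $\underline{V}^{\otimes r}_{\underline{l}}=V^{\otimes l}\otimes\eta_{n+1}^{\otimes r-l}$ everything reduces to the classical q-Schur setup on $V^{\otimes l}$.

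For part~(1), the pure Hecke relations \eqref{qha1}--\eqref{qha3} for the $\mathrm{T}_{s_i}$ follow at once from Theorem~\ref{q duality} since $\Psi|_l$ is the restriction of $\Psi$. Relations \eqref{qha4}--\eqref{qha5} among the $x_\sigma$ are the Hecke relations of $\mathbf{H}(\mathfrak{S}_l)$: since $\psi_\sigma^{\underline{l}}=\Psi^V_l(\sigma)\otimes{\rm id}^{\otimes r-l}$ on $\underline{V}^{\otimes r}_{\underline{l}}$ and zero on the other blocks, they are inherited from the fact that $\Psi^V_l$ is a representation of $\mathbf{H}(\mathfrak{S}_l)$. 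The genuinely mixed checks are \eqref{qha6}--\eqref{qha8}. For \eqref{qha6}--\eqref{qha7} with $i<l$, both positions $i$ and $i+1$ of any basis vector in $\underline{V}^{\otimes r}_{\underline{l}}$ lie in the V-part, so $\Psi(T_{s_i})$ restricted there agrees with $\psi^{\underline{l}}_{s_i}$, and the identity reduces once more to Hecke arithmetic on $V^{\otimes l}$; on the other $\underline{V}^{\otimes r}_I$ both sides vanish because $\psi_\sigma^{\underline{l}}=0$ there. For \eqref{qha8} with $i>l$, both positions carry $\eta_{n+1}$, so the case $f(i)=f(i+1)$ of the $M_fT_{s_i}$-formula gives $\Psi(T_{s_i})=q^{-1}\cdot{\rm id}$ on $\underline{V}^{\otimes r}_{\underline{l}}$, from which commutation with $\psi_\sigma^{\underline{l}}$ and the scalar $q^{-1}$ is immediate. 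Part~(2) is the trivial case $l=0$: on $\underline{V}^{\otimes r}_0=\mathscr{A}\eta_{n+1}^{\otimes r}$ every $T_{s_i}$ scales by $q^{-1}$, so the relations \eqref{qha1}--\eqref{qha3} collapse to $q^{-2}\cdot{\rm id}$ identities that visibly hold.

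For part~(3), each generator of $\mathbf{U}_q(\mathfrak{gl}_n)\hookrightarrow\mathbf{U}_q(\mathfrak{gl}_{n+1})$ annihilates or fixes $\eta_{n+1}$, so the coproduct action on $\underline{V}^{\otimes r}_{\underline{l}}$ factors as $\Phi_V(g)\otimes{\rm id}^{\otimes r-l}$ on the first $l$ slots. By Theorem~\ref{q duality} applied to $V^{\otimes l}$, $\Phi_V(g)$ commutes with $\Psi^V_l(\sigma)$, hence $\Phi(g)$ commutes with $\psi_\sigma^{\underline{l}}$; it commutes with $\Psi(T_{s_i})$ by the full q-Schur duality on $\underline{V}^{\otimes r}$. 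Conjugation by $\Psi(\omega_I)$, itself a product of $T_{s_j}$'s commuting with $\Phi(g)$, then transports this commutation from $\underline{V}^{\otimes r}_{\underline{l}}$ to any $\underline{V}^{\otimes r}_I$, giving the claim on all of $\underline{V}^{\otimes r}_l$.

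Part~(4) now assembles things: define $\Xi$ block-diagonally on $\underline{V}^{\otimes r}=\bigoplus_l\underline{V}^{\otimes r}_l$, with $\mathrm{T}_{s_i}\mapsto\Psi(T_{s_i})$ (which preserves each block by q-Schur duality) and $x_\sigma^{(l)}\mapsto\Xi_l(x_\sigma)$ on the $l$-block and zero on all other blocks. Relations \eqref{qha1}--\eqref{qha8} then hold block by block by parts~(1)--(2), while the new relation \eqref{qha9} is automatic because $\Xi(x_\gamma^{(k)})$ has image inside $\underline{V}^{\otimes r}_k$ and $\Xi(x_\sigma^{(l)})$ annihilates $\underline{V}^{\otimes r}_k$ for $k\neq l$. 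The main obstacle I anticipate is the case-by-case bookkeeping in \eqref{qha6}--\eqref{qha7}: one must verify that the piecewise cases $l(\sigma s_i)=l(\sigma)\pm 1$ on the algebra side correspond precisely to $f(i)\lessgtr f(i+1)$ in the $M_fT_{s_i}$-formula after the labels of $f$ have been permuted by $\sigma$, so that the $(q^{-1}-q)$ correction terms line up on both sides; this is routine but requires patience.
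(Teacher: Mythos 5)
Your proposal follows essentially the same route as the paper: verify the defining relations generator by generator, reducing everything on $\underline{V}^{\otimes r}_{\underline{l}}=V^{\otimes l}\otimes\eta_{n+1}^{\otimes r-l}$ to the classical q-Schur picture on $V^{\otimes l}$, checking that both sides of each relation vanish on the complementary blocks, and assembling $\Xi$ block-diagonally with relation (\ref{qha9}) automatic. The only point you gloss that the paper spells out is in the mixed relations with $i<l$: for the composite in which ${\rm T}_{s_i}$ acts first on a basis vector outside $\underline{V}^{\otimes r}_{\underline{l}}$, one must note that $\Psi(T_{s_i})$ keeps such a vector outside $\underline{V}^{\otimes r}_{\underline{l}}$ (since $s_i$ with $i<l$ preserves the position set $\{1,\dots,l\}$) before concluding that $\psi_\sigma^{\underline{l}}$ kills it.
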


\begin{proof}
	(1) For $1\leq l\leq r$, we need to show that $\Xi_l$ keeps the relations (\ref{qha1}) - (\ref{qha8}).		
	Recall the action of Hecke algebra on $\underline{V}^{\otimes r}$, it is easy to get the following relations:	
	\begin{align}\label{check1}
		\Xi_l({\rm T}_{s_i})^2=\Xi_l(1+(q^{-1}-q){\rm T}_{s_i}),\quad \Xi_l({\rm T}_{s_i})\Xi_l({\rm T}_{s_j})=\Xi_l({\rm T}_{s_j})\Xi_l({\rm T}_{s_i}),
	\end{align}
	with $0\leq i\ne j\leq r-1, |j-i|>1$, and
	\begin{align}\label{check2}
		\Xi_l({\rm T}_{s_i})\Xi_l({\rm T}_{s_j})\Xi_l({\rm T}_{s_i})=\Xi_l({\rm T}_{s_j})\Xi_l({\rm T}_{s_i})\Xi_l({\rm T}_{s_j}),
	\end{align}
	with $0\leq i\ne j\leq r-1, |j-i|=1.$
		
	Suppose that $\boldsymbol{\eta_j}\in\underline{V}^{\otimes r}_{\underline{l}}$,  $\boldsymbol{\eta_k}\in\underline{V}^{\otimes r}_{l}$ with $\boldsymbol{\eta_k}\notin\underline{V}^{\otimes r}_{\underline{l}}$. Note that $\psi_\sigma^{\underline{l}}|_{\underline{V}^{\otimes r}_{\underline{l}}}=\psi_\sigma$ and $\underline{V}^{\otimes r}_{\underline{l}}\simeq V^{\otimes l}$. The action of $x_\sigma$ comes from the action of $T_\sigma\in\mathbf{H}(\mathfrak{S}_l)$,
    which shows
	\[\begin{split}
		\Xi_l(x_\sigma)\Xi_l( x_{s_i})(\boldsymbol{\eta_j})=\left\{
		\begin{array}{ll}
			\Xi_l(x_{\sigma s_i})(\boldsymbol{\eta_j}),& l(\sigma s_i)=l(\sigma)+1,\\
			\Xi_l(x_{\sigma s_i}+(q^{-1}-q)x_\sigma)(\boldsymbol{\eta_j}),&  l(\sigma s_i)=l(\sigma)-1.
		\end{array}
		\right.
	\end{split}\]
	Moreover, we have
	\[\begin{split}
		\Xi_l(x_\sigma)\Xi_l( x_{s_i})(\boldsymbol{\eta_k})=0=\left\{
		\begin{array}{ll}
			\Xi_l(x_{\sigma s_i})(\boldsymbol{\eta_k}),&  l(\sigma s_i)=l(\sigma)+1,\\
			\Xi_l(x_{\sigma s_i}+(q^{-1}-q)x_\sigma)(\boldsymbol{\eta_k}),&  l(\sigma s_i)=l(\sigma)-1.
		\end{array}
		\right.
	\end{split}\]
	Hence we get
	\begin{align}\label{check3}
		\begin{split}
			\Xi_l(x_\sigma)\Xi_l( x_{s_i})=\left\{
			\begin{array}{ll}
				\Xi_l(x_{\sigma s_i}),&  l(\sigma s_i)=l(\sigma)+1,\\
				\Xi_l(x_{\sigma s_i}+(q^{-1}-q)x_\sigma),&  l(\sigma s_i)=l(\sigma)-1.
			\end{array}
			\right.
		\end{split}
	\end{align}
	In the same way, we can get
	\begin{align}\label{check4}
		\begin{split}
			\Xi_l(x_{s_i})\Xi_l( x_\sigma)=\left\{
			\begin{array}{ll}
				\Xi_l(x_{s_i\sigma}),&  l(s_i\sigma)=l(\sigma)+1,\\
				\Xi_l(x_{s_i\sigma}+(q^{-1}-q)x_\sigma),&  l(s_i\sigma)=l(\sigma)-1.
			\end{array}
			\right.
		\end{split}
	\end{align}
	
	When $i<l$, note that $\Psi|_l(T_{s_i})$ is equal to $\psi_{s_i}^{\underline{l}}$ by  definition. Using this one checks that
	\[\begin{split}
		\Xi_l({\rm T}_{s_i})\Xi_l( x_\sigma)(\boldsymbol{\eta_j})=\left\{
		\begin{array}{ll}
			\Xi_l(x_{s_i\sigma})(\boldsymbol{\eta_j}),&  l(s_i\sigma)=l(\sigma)+1, i<l,\\
			\Xi_l(x_{s_i\sigma}+(q^{-1}-q)x_\sigma)(\boldsymbol{\eta_j}),&  l(s_i\sigma)=l(\sigma)-1, i<l,
		\end{array}
		\right.
	\end{split}\]
	and\[\begin{split}
		\Xi_l(x_\sigma)\Xi_l ({\rm T}_{s_i})(\boldsymbol{\eta_j})=\left\{
		\begin{array}{ll}
			\Xi_l(x_{\sigma s_i})(\boldsymbol{\eta_j}),&  l(\sigma s_i)=l(\sigma)+1, i<l,\\
			\Xi_l(x_{\sigma s_i}+(q^{-1}-q)x_\sigma)(\boldsymbol{\eta_j}),&  l(\sigma s_i)=l(\sigma)-1, i<l.
		\end{array}
		\right.
	\end{split}\]
	Since the action of $\Xi_l(T_{s_i})=\Psi|_l(T_{s_i})$ on $\boldsymbol{\eta_k}$ does not change the latter $r-l$ components of $\boldsymbol{\eta_k}$, we have $\Xi_l({\rm T}_{s_i})(\boldsymbol{\eta_k})\notin\underline{V}^{\otimes r}_{\underline{l}}$. It is clear that
	\[\begin{split}
		\Xi_l({\rm T}_{s_i})\Xi_l( x_\sigma)(\boldsymbol{\eta_k})=0=\left\{
		\begin{array}{ll}
			\Xi_l(x_{s_i\sigma})(\boldsymbol{\eta_k}),&  l(s_i\sigma)=l(\sigma)+1, i<l,\\
			\Xi_l(x_{s_i\sigma}+(q^{-1}-q)x_\sigma)(\boldsymbol{\eta_k}),&  l(s_i\sigma)=l(\sigma)-1, i<l,
		\end{array}
		\right.
	\end{split}\]
	and
	\[\begin{split}
		\Xi_l(x_\sigma)\Xi_l ({\rm T}_{s_i})(\boldsymbol{\eta_k})=0=\left\{
		\begin{array}{ll}
			\Xi_l(x_{\sigma s_i})(\boldsymbol{\eta_k}),&  l(\sigma s_i)=l(\sigma)+1, i<l,\\
			\Xi_l(x_{\sigma s_i}+(q^{-1}-q)x_\sigma)(\boldsymbol{\eta_k}),&  l(\sigma s_i)=l(\sigma)-1, i<l.
		\end{array}
		\right.
	\end{split}
	\]	
     Hence we get
	\begin{align}\label{check5}
		\begin{split}
			\Xi_l({\rm T}_{s_i})\Xi_l( x_\sigma)=\left\{
			\begin{array}{ll}
				\Xi_l(x_{s_i\sigma}),&  l(s_i\sigma)=l(\sigma)+1, i<l,\\
				\Xi_l(x_{s_i\sigma}+(q^{-1}-q)x_\sigma),&  l(s_i\sigma)=l(\sigma)-1, i<l,
			\end{array}
			\right.
		\end{split}
	\end{align}
	and
	\begin{align}\label{check6}
		\begin{split}
			\Xi_l(x_\sigma)\Xi_l( {\rm T}_{s_i})=\left\{
			\begin{array}{ll}
				\Xi_l(x_{\sigma s_i}),&  l(\sigma s_i)=l(\sigma)+1, i<l,\\
				\Xi_l(x_{\sigma s_i}+(q^{-1}-q)x_\sigma),&  l(\sigma s_i)=l(\sigma)-1, i<l.
			\end{array}
			\right.
		\end{split}
	\end{align}

     When $i>l$. The following formula follows from the definition:
	\begin{align}\label{check7}
		\Xi_l(x_{\sigma})\Xi_l({\rm T}_{s_i})=q^{-1}\Xi_l(x_{\sigma})=\Xi_l({\rm T}_{s_i})\Xi_l(x_{\sigma}), i>l.
	\end{align}
     
	It follows form (\ref{check1}) - (\ref{check7}) that $\Xi_l$ is an algebra homomorphism from $\mathfrak{H}\kern -.6em\mathfrak{H}^l_r$ to ${\rm End}_\mathscr{A}(\underline{V}^{\otimes r}_l)$.
	
	(2) In this situation, $\Xi_0$ obviously becomes an algebra homomorphism from $\mathfrak{H}\kern -.6em\mathfrak{H}^0_r$ to ${\rm End}_\mathscr{A}(\underline{V}^{\otimes r}_0)$, since $\mathfrak{H}\kern -.6em\mathfrak{H}^0_r=\mathbf{H}(\mathfrak{S}_r)$.
	
	(3) When $l=0$, it is just quantum $q$-Schur duality, since $\mathfrak{H}\kern -.6em\mathfrak{H}^0_r=\mathbf{H}(\mathfrak{S}_r)$. Now we suppose $1\leq l\leq r$. Note that the action of ${\rm T}_{s_i}$ is given by that of $T_{s_i}$ in  $\mathbf{H}(\mathfrak{S}_r)$. It is easy to check that $\Phi(g)\Xi_l({\rm T}_{s_i})=\Xi_l({\rm T}_{s_i})\Phi(g)$ for any $g\in \mathbf{U}_q(\mathfrak{gl}_n)$ because of the quantum $q$-Schur duality. 
	
	 Recall that the generators of $\mathbf{U}_q(\mathfrak{gl}_n)$ are $E_i,F_i,H_j,H_j^{-1}(1\leq i\leq n-1,1\leq j\leq n)$. 
	 It is clear from the definition that
	$\Delta^r(H_i)\underline{V}^{\otimes r}_I\subseteq\underline{V}^{\otimes r}_I$. 
	Let $\boldsymbol{l}=(l_1,\cdots,l_r)\in \mathcal{N}^r_l$ and $I\subset\underline{r}$ with $\#I=l$.
	If $l_j\neq i+1$, then
	$$\overbrace{1\otimes\cdots\otimes1} ^{j-1}\otimes{E_i}\otimes\overbrace{K_i^{-1}\otimes\cdots\otimes K_i^{-1}}^{r-j}$$
	acts on $\boldsymbol{\eta_l}$ as zero.
	Otherwise,
	\begin{align*}
	&\overbrace{1\otimes\cdots\otimes1} ^{j-1}\otimes{E_i}\otimes\overbrace{K_i^{-1}\otimes\cdots\otimes K_i^{-1}}^{r-j}(\boldsymbol{\eta_l})\\
	=&\eta_{l_1}\otimes\eta_{l_2}\cdots\otimes\eta_{l_{j-1}}\otimes \eta_i\otimes K_i^{-1}\eta_{l_{j+1}}\cdots\otimes K_i^{-1}\eta_{l_r}.
	\end{align*}
    It shows
	\[\overbrace{1\otimes\cdots\otimes1} ^{j-1}\otimes{E_i}\otimes\overbrace{K_i^{-1}\otimes\cdots\otimes K_i^{-1}}^{r-j}\underline{V}^{\otimes r}_I\subseteq\underline{V}^{\otimes r}_I.\]
	Hence we have $\Delta^r(E_i)\underline{V}^{\otimes r}_I\subseteq\underline{V}^{\otimes r}_I$. In the same way, we can get  $\Delta^r(F_i)\underline{V}^{\otimes r}_I\subseteq\underline{V}^{\otimes r}_I$. 
	We now deduce that $\Phi(g)(\underline{V}^{\otimes r}_{I})\subseteq\underline{V}^{\otimes r}_{I}$ for any $g\in \mathbf{U}_q(\mathfrak{gl}_n)$. Hence we have $$\Phi(g)\Xi_l(x_\sigma)(\boldsymbol{\eta_k})=0=\Xi_l(x_\sigma)\Phi(g)(\boldsymbol{\eta_k})$$ for any $\boldsymbol{\eta_k}\notin\underline{V}^{\otimes r}_{\underline{l}}.$

	Since 
    $x_\sigma$ has the same action as $T_\sigma$ in $\mathbf{H}(\mathfrak{S}_l)$ on $\underline{V}^{\otimes r}_{{l}}$. According to quantum $q$-Schur duality, we have $\Xi_l(x_\sigma)\Phi(g)(\boldsymbol{\eta_j})=\Phi(g)\Xi_l(x_\sigma)(\boldsymbol{\eta_j})$ for any $\boldsymbol{\eta_j}\in\underline{V}^{\otimes r}_{\underline{l}}$. Thus $\Xi_l(\mathfrak{H}\kern -.6em\mathfrak{H}^l_r)$ commutes with $\Phi(g)$ for any $g\in \mathbf{U}_q(\mathfrak{gl}_n)$in ${\rm End}_\mathscr{A}(\underline{V}^{\otimes r}_l)$.
	
	(4) Since $\Psi$ is the representation of $\mathbf{H}(\mathfrak{S}_r)$, $\Xi$ keeps the relations (\ref{qha1}) - (\ref{qha3}). Moreover, since $\Xi|_{\underline{V}^{\otimes r}_l}=\Xi_l$ for $1\leq l\leq r$, we have:
	\begin{align*}
		\Xi(x_{\sigma}^{(l)})\circ\Xi(x_{\mu}^{(k)})= \Xi_l(x_{\sigma}^{(l)})\Xi_k(x_{\mu}^{(k)})=0 \mbox{ for  } \mu\in\mathfrak{S}_k, \sigma\in\mathfrak{S}_l, k,l\in\underline{r}, k\neq l;
	\end{align*}
	\begin{align*}
		\begin{split}
			\Xi({\rm T}_{s_i})\circ\Xi(x_{\sigma}^{(l)})=\Psi(T_{s_i})\Xi_l(x_{\sigma}^{(l)})=\left\{
			\begin{array}{ll}
				\Xi_l(x_{s_i\sigma}^{(l)}),&  l(s_i\sigma)=l(\sigma)+1, i<l,\\
				\Xi_l(x_{s_i\sigma}^{(l)}+(q^{-1}-q)x_\sigma^{(l)}),&  l(s_i\sigma)=l(\sigma)-1, i<l;
			\end{array}
			\right.
		\end{split}
	\end{align*}	
	\begin{align*}
		\begin{split}
			\Xi(x_{\sigma}^{(l)})\circ\Xi({\rm T}_{s_i})=\Xi_l(x_{\sigma}^{(l)})\Psi(T_{s_i})=\left\{
			\begin{array}{ll}
				\Xi_l(x_{\sigma s_i}^{(l)}),&  l(\sigma s_i)=l(\sigma)+1, i<l,\\
				\Xi_l(x_{\sigma s_i}^{(l)}+(q^{-1}-q)x_\sigma^{(l)}),&  l(\sigma s_i)=l(\sigma)-1, i<l;
			\end{array}
			\right.
		\end{split}
	\end{align*}	
	\begin{align*}
		\Xi({\rm T}_{s_i})\circ\Xi(x_{\sigma}^{(l)})=\Psi(T_{s_i})\Xi_l(x_{\sigma}^{(l)})
		=q^{-1}\Xi_l(x_{\sigma}^{(l)})=\Xi({\rm T}_{s_i}x_{\sigma}^{(l)}) \mbox{ for }\sigma\in \mathfrak{S}_l, i>l;
	\end{align*}	
	\begin{align*}
		\Xi(x_\sigma^{(l)})\circ\Xi({\rm T}_{s_i})=\Xi_l(x_{\sigma}^{(l)}\Psi(T_{s_i})
		=q^{-1}\Xi_l(x_{\sigma}^{(l)})=\Xi(x_{\sigma}^{(l)}{\rm T}_{s_i})\mbox{ for }\sigma\in \mathfrak{S}_l, i>l.
	\end{align*}
	So $\Xi$ is an algebra homomorphism and therefore it is a representation of $\mathfrak{H}\kern -.6em\mathfrak{H}_r$.
\end{proof}
     From the lemma above, we set $\mathcal{D}(n,r):=\Xi(\mathfrak{H}\kern -.6em\mathfrak{H}_r)\subset {\rm End}_\mathscr{A}(\underline{V}^{\otimes r}) $.
\section{Duality related to duplex Hecke algebra}
Firstly, we recall some properties of $q$-Schur algebra.\par
Consider an element $x_{\lambda}=\sum_{\omega\in \mathfrak{S}_{\lambda}}T_{\omega}$ in $\mathbf{H}(\mathfrak{S}_r)$. Then the right ideal $x_{\lambda}\mathbf{H}(\mathfrak{S}_r)$ of $\mathbf{H}(\mathfrak{S}_r)$ is a $\mathbb{Q}(q)$-module, which has a free basis $\{x_{\lambda}T_d\mid d\in \prescript{\lambda}{}{\mathfrak{S}}\}$.
\begin{lem}
	The right $\mathbf{H}(\mathfrak{S}_r)$-module structure on $x_{\lambda}\mathbf{H}(\mathfrak{S}_r)$ is given by the formulas: 
	\begin{align*}
	\begin{split}
		(x_{\lambda}T_d)T_{s_i}=\left\{
		\begin{array}{lll}
		x_{\lambda}T_{ds_i},&l(ds_i)=l(d)+1,ds_i\in\prescript{\lambda}{}{S},\\
		q^{-1}x_{\lambda}T_d,&l(ds_i)=l(d)+1,ds_i\notin\prescript{\lambda}{}{S},\\
				(q^{-1}-q)x_{\lambda}T_d+x_{\lambda}T_{ds_i},&l(ds_i)=l(d)-1,
		\end{array}
			\right.
		\end{split}	\end{align*}
for any $s_i$ $(i=1,2,\cdots,r-1)$ and $d\in \prescript{\lambda}{}{\mathfrak{S}}$. In the third case, when $l(ds_i)=l(d)-1$, we have $ds_i\in\prescript{\lambda}{}{\mathfrak{S}}$.
\end{lem}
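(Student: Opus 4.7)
My plan is to prove the three cases by direct computation, drawing on two standard ingredients from the Iwahori-Hecke algebra toolkit. The first is the elementary multiplication rule $T_d T_{s_i} = T_{ds_i}$ when $l(ds_i)=l(d)+1$, and $T_d T_{s_i} = T_{ds_i} + (q^{-1}-q)T_d$ when $l(ds_i)=l(d)-1$; this follows by writing $d=(ds_i)s_i$ in the length-decreasing case and invoking the quadratic relation $T_{s_i}^2 = (q^{-1}-q)T_{s_i}+1$. The second is the absorption identity $x_\lambda T_{s_k} = q^{-1}x_\lambda$ for any simple reflection $s_k\in\mathfrak{S}_\lambda$, which I would derive by decomposing $\mathfrak{S}_\lambda = B\sqcup Bs_k$ into right $\langle s_k\rangle$-cosets so that $x_\lambda = y(1+T_{s_k})$ for some $y\in\mathbf{H}(\mathfrak{S}_\lambda)$, then simplifying $(1+T_{s_k})T_{s_k}$ via the quadratic.

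With these tools in hand, the first case is immediate: $(x_\lambda T_d)T_{s_i} = x_\lambda T_{ds_i}$ by the multiplication rule, and the hypothesis $ds_i\in{}^\lambda\mathfrak{S}$ puts this already in the desired basis form. The third case follows equally directly, producing $(x_\lambda T_d)T_{s_i} = x_\lambda T_{ds_i} + (q^{-1}-q)x_\lambda T_d$. To verify the auxiliary assertion that $ds_i\in{}^\lambda\mathfrak{S}$ in this case, I would argue by contradiction: if some simple $s_k\in\mathfrak{S}_\lambda$ satisfied $l(s_k\cdot ds_i)<l(ds_i)$, then $d\in{}^\lambda\mathfrak{S}$ would force $l(s_k d)=l(d)+1$, whence $l(s_k ds_i)\geq l(s_k d)-1 = l(d) > l(ds_i)$, a contradiction.

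The second case is the delicate one. The hypothesis $ds_i\notin{}^\lambda\mathfrak{S}$ yields a simple $s_k\in\mathfrak{S}_\lambda$ with $l(s_k\cdot ds_i)<l(ds_i)=l(d)+1$, and combined with $l(s_k d)=l(d)+1$ this pins $l(s_k ds_i)=l(d)$. A short Coxeter exchange argument then forces $s_k d = ds_i$ in $\mathfrak{S}_r$; geometrically, $d$ conjugates $s_i$ to $s_k$, and in the symmetric-group setting the equality can be verified directly by reading off one-line notation. Hence $T_{ds_i}=T_{s_k}T_d$ (the lengths add), so $(x_\lambda T_d)T_{s_i} = x_\lambda T_{s_k}T_d = q^{-1}x_\lambda T_d$ via the absorption identity, completing this case.

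The principal obstacle I anticipate is the Coxeter-combinatorial identification $s_k d = ds_i$ in the second case: although natural from the length counts, a fully rigorous derivation requires careful exchange-condition bookkeeping, or equivalently a direct tracking of how right multiplication by $s_i$ and left multiplication by $s_k$ affect the one-line notation of $d$. The other nontrivial checkpoint, deriving $x_\lambda T_{s_k} = q^{-1}x_\lambda$ cleanly under the paper's conventions, is an essentially routine quadratic-relation manipulation but needs to be performed with care.
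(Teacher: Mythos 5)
Your overall architecture is the standard one for this lemma (which the paper states without proof): the deformed multiplication rule for $T_dT_{s_i}$, a Deodhar-type exchange argument in the second case, and an absorption identity for $x_\lambda$. The two combinatorial points you flag as delicate are actually fine. For case 3 your length-inequality contradiction is correct. For case 2, apply the exchange condition to $s_k\cdot(ds_i)$ using the reduced word $t_1\cdots t_m s_i$ of $ds_i$ (where $t_1\cdots t_m$ is reduced for $d$): deleting a letter from the $d$-part would give $l(s_kd)<l(d)$, contradicting $d\in\prescript{\lambda}{}{\mathfrak{S}}$, so the deleted letter must be $s_i$ and $s_kds_i=d$, i.e. $ds_i=s_kd$ as you want.

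The genuine gap is the step you dismiss as routine. With the paper's quadratic relation $(T_{s}+q)(T_{s}-q^{-1})=0$ one has $T_{s}^2=1+(q^{-1}-q)T_{s}$, hence $(1+T_{s_k})T_{s_k}=1+(1+q^{-1}-q)T_{s_k}$, which is \emph{not} a scalar multiple of $1+T_{s_k}$. So the unweighted sum $x_\lambda=\sum_{w\in\mathfrak{S}_\lambda}T_w$ is not a $T_{s_k}$-eigenvector under this normalization, your factorization $x_\lambda=y(1+T_{s_k})$ does not yield $x_\lambda T_{s_k}=q^{-1}x_\lambda$, and case 2 collapses. (The lemma as literally stated in the paper fails for the same reason; the smallest counterexample is $r=2$, $\lambda=(2)$, $d=e$, $i=1$, where $(1+T_{s_1})T_{s_1}\neq q^{-1}(1+T_{s_1})$.) The repair is to take $x_\lambda=\sum_{w\in\mathfrak{S}_\lambda}q^{-l(w)}T_w$, for which the coset decomposition gives $x_\lambda=y\,(1+q^{-1}T_{s_k})$ and $(1+q^{-1}T_{s_k})T_{s_k}=q^{-1}(1+q^{-1}T_{s_k})$; equivalently, one could switch to the normalization $(T_s+1)(T_s-q^{-1})=0$. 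With either fix your argument goes through verbatim, but you should carry out this computation explicitly rather than assert it, since it is exactly the point where the paper's conventions bite.
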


\begin{defn}
	For positive integers $n,r$, the Schur algebra over $\mathbb{Q}(q)$ is the endomorphism algebra
     $$S(n,r)={\rm End}_{\mathbf{H}(\mathfrak{S}_r)}(\bigoplus_{\lambda\in\Lambda(n,r)}x_\lambda \mathbf{H}(\mathfrak{S}_r)).$$
    
\end{defn}

\begin{prop}{\label{property}}
	Following the notaion above, we have
	\begin{itemize} 
		\item[(1)] $V^{\otimes r}\simeq \bigoplus_{\lambda\in\Lambda(n,r)}x_\lambda \mathbf{H}(\mathfrak{S}_r)$;	
		\item[(2)] $S(n,r)$ is a free $\mathbb{Q}(q)$-module with a basis
		$\{\zeta^w_{\lambda,\mu}\mid\lambda,\mu\in \Lambda(n,r),w\in\prescript{\lambda}{}{\mathfrak{S}}^{\mu}\}$, where $\zeta^w_{\lambda,\mu}$ only does not annihilate $x_\mu \mathbf{H}(\mathfrak{S}_r)$;
		\item[(3)]If $\mu=(\mu_1,\cdots,\mu_n)$, then $\zeta^w_{\lambda,\mu}$ only has nonzero image on ${V}^{\otimes r}_{k}$ where $k=r-\mu_{n}$.
	\end{itemize} 	
\end{prop}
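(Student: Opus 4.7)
The plan is to treat the three parts in order, using an explicit realisation of each $x_\lambda\mathbf{H}(\mathfrak{S}_r)$ inside $V^{\otimes r}$ as the framework for (2) and (3).

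For part (1), for each composition $\lambda=(\lambda_1,\ldots,\lambda_n)\in\Lambda(n,r)$ let $f_\lambda:\{1,\ldots,r\}\to\{1,\ldots,n\}$ be the weakly increasing content-$\lambda$ function and set $M_\lambda=v_1^{\otimes\lambda_1}\otimes\cdots\otimes v_n^{\otimes\lambda_n}$. The diagonal case of the tensor-space action formula gives $M_\lambda T_{s_i}=q^{-1}M_\lambda$ for every $s_i\in\mathfrak{S}_\lambda$, matching exactly the relation enjoyed by $x_\lambda$ in the preceding lemma (the case $d=e$, $s_i\in\mathfrak{S}_\lambda$). I therefore define $\Theta_\lambda:x_\lambda\mathbf{H}(\mathfrak{S}_r)\to V^{\otimes r}$ by $x_\lambda T_d\mapsto M_\lambda T_d$ for $d\in\prescript{\lambda}{}{\mathfrak{S}}$, extended linearly. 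Well-definedness reduces to matching, case by case (whether $ds_i\in\prescript{\lambda}{}{\mathfrak{S}}$ and whether $l(ds_i)=l(d)\pm 1$), the three-branch formula of the preceding lemma against the three-case action on $M_\lambda T_d$ induced by the tensor-space basis action. Since $\{M_\lambda T_d:d\in\prescript{\lambda}{}{\mathfrak{S}}\}$ is a basis of the cyclic submodule $M_\lambda\mathbf{H}(\mathfrak{S}_r)$ (in bijection with the $\mathfrak{S}_r$-orbit of $f_\lambda$), $\Theta_\lambda$ is bijective onto its image. Summing over $\lambda$ exhausts the basis $\{M_f\}$ of $V^{\otimes r}$, giving the direct-sum decomposition.

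For part (2), the product decomposition produces
\[S(n,r)=\bigoplus_{\lambda,\mu\in\Lambda(n,r)}\operatorname{Hom}_{\mathbf{H}(\mathfrak{S}_r)}\bigl(x_\mu\mathbf{H}(\mathfrak{S}_r),x_\lambda\mathbf{H}(\mathfrak{S}_r)\bigr).\]
A morphism $\phi$ in the $(\lambda,\mu)$-summand is determined by $y=\phi(x_\mu)\in x_\lambda\mathbf{H}(\mathfrak{S}_r)$ subject to $yT_s=q^{-1}y$ for every $s\in\mathfrak{S}_\mu$, coming from $x_\mu(T_s-q^{-1})=0$. I will exhibit, following the Dipper--James strategy, the basis $\{x_\lambda T_w x_\mu:w\in\prescript{\lambda}{}{\mathfrak{S}}^\mu\}$ of this $\mathfrak{S}_\mu$-invariant subspace: invariance is automatic from $x_\mu T_s=q^{-1}x_\mu$, while linear independence follows by expanding each $x_\lambda T_w x_\mu$ in the $T$-basis of $\mathbf{H}(\mathfrak{S}_r)$ and noting that the support sits inside the double coset $\mathfrak{S}_\lambda w\mathfrak{S}_\mu$, so that distinct distinguished representatives $w\in\prescript{\lambda}{}{\mathfrak{S}}^\mu$ contribute disjoint supports. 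The endomorphism $\zeta^w_{\lambda,\mu}\in S(n,r)$ is then the extension of $x_\mu\mapsto x_\lambda T_w x_\mu$ by zero on every $x_\nu\mathbf{H}(\mathfrak{S}_r)$ with $\nu\neq\mu$.

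Part (3) is a corollary of (1): under $\Theta_\mu$, the summand $x_\mu\mathbf{H}(\mathfrak{S}_r)$ corresponds to $M_\mu\mathbf{H}(\mathfrak{S}_r)\subseteq V^{\otimes r}$, and $M_\mu$ has exactly $\mu_n$ tensor slots equal to $v_n$. The right $\mathbf{H}(\mathfrak{S}_r)$-action permutes tensor positions (with scalar corrections) and therefore preserves this count, so $M_\mu\mathbf{H}(\mathfrak{S}_r)\subseteq V^{\otimes r}_{r-\mu_n}$. Combined with the support statement of (2), $\zeta^w_{\lambda,\mu}$ is non-zero only on $V^{\otimes r}_{r-\mu_n}$. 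The step I expect to be the main obstacle is the compatibility check in (1): aligning each of the three branches of the preceding lemma with the corresponding case of the tensor-space action formula. A subtlety is the normalisation of $x_\lambda$, since for the identity $x_\lambda T_s=q^{-1}x_\lambda$ to hold on the nose for all $s\in\mathfrak{S}_\lambda$ one typically weights the defining sum by $q^{-l(w)}$, so $\Theta_\lambda$ may need to absorb a compensating scalar to reconcile the convention used in the paper. Once this bookkeeping is pinned down, (2) and (3) follow by standard double-coset arguments.
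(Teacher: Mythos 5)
Your proposal is correct and is essentially the standard argument that the paper simply outsources to \cite{FDA} (its entire proof of (1)--(2) is the citation ``classical properties of Schur algebra,'' with (3) read off, as you do, from $M_\mu\mathbf{H}(\mathfrak{S}_r)\subseteq V^{\otimes r}_{r-\mu_n}$ together with the support statement in (2)). The normalisation worry you flag is genuine but is a defect of the paper's definition $x_\lambda=\sum_{\omega\in\mathfrak{S}_\lambda}T_\omega$ rather than of your argument: with the quadratic relation $(T_{s}+q)(T_{s}-q^{-1})=0$ one checks (already for $\mathfrak{S}_\lambda=\{e,s\}$) that $x_\lambda T_{s}=q^{-1}x_\lambda$ forces the weighted sum $x_\lambda=\sum_{\omega\in\mathfrak{S}_\lambda}q^{-l(\omega)}T_\omega$, and with that correction the preceding lemma, your map $\Theta_\lambda$, and the double-coset basis $\{x_\lambda T_w x_\mu\}$ all go through exactly as you describe.
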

\begin{proof}
	Result (1)(2) are classical propeties of $q$-Schur algebra, see \cite{DDPW} for  details. Then (3) follows immediately. 
\end{proof}

\begin{defn}
	The subalgebra of $\mathbf{U}_q(\mathfrak{gl}_{n+1})$ generated by $E_i,F_i,H_j,H_j^{-1}(1\leqslant i\leqslant n-1,1\leqslant j\leqslant n+1)$ is called Levi quantum group and denoted by $L_q(\mathfrak{gl}_{n+1})$. It's easy to see $L_q(\mathfrak{gl}_{n+1})\simeq U_q(\mathfrak{gl}_n)\oplus \langle H_{n+1},H_{n+1}^{-1}\rangle$ as a space.
\end{defn}

It's easy to see $L_q(\mathfrak{gl}_{n+1})$ is a Hopf algebra and have a natural representation on $\underline{V}^{\otimes r}$, which we still denote as $\Phi$.

\begin{thm}\label{main}
      Recall that $q$ is transcendental over $\mathbb{Q}$. We have the following double centralizer property:
      $${\rm End}_{\mathcal{D}(n,r)}(\underline{V}^{\otimes r})=\Phi(L_q(\mathfrak{gl}_{n+1})),$$	
      $${\rm End}_{L_q(\mathfrak{gl}_{n+1})}(\underline{V}^{\otimes r})=\mathcal{D}(n,r).$$	
\end{thm}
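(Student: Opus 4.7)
The plan is to deduce both centralizer equalities from a single structural identification, namely
\[
\mathcal{D}(n,r) \;=\; \bigoplus_{l=0}^{r} M_{\binom{r}{l}}\bigl(\Psi^V_l(\mathbf{H}(\mathfrak{S}_l))\bigr),
\]
together with the classical double centralizer theorem in the semisimple setting, which applies here because $q$ is transcendental (cf.~\cite{GW,JAN95}). As a first step I would verify that $\Phi(L_q(\mathfrak{gl}_{n+1}))$ and $\mathcal{D}(n,r)$ centralize each other. The commutation of $\Phi(\mathbf{U}_q(\mathfrak{gl}_n))$ with $\mathcal{D}(n,r)$ is precisely Lemma~\ref{l rep of qha}(3); the additional generator $H_{n+1}^{\pm 1}$ acts as the scalar $q^{\pm(r-l)}$ on each $\underline{V}^{\otimes r}_l$ and hence commutes with all of $\mathcal{D}(n,r)$, because every generator of $\mathfrak{H}\kern -.6em\mathfrak{H}_r$ preserves the decomposition $\underline{V}^{\otimes r}=\bigoplus_l \underline{V}^{\otimes r}_l$ (the ${\rm T}_{s_i}$'s preserve the number of tensor slots occupied by $\eta_{n+1}$, and each $\Xi(x_\sigma^{(l)})$ is zero outside $\underline{V}^{\otimes r}_l$).

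Next I would make the $L_q(\mathfrak{gl}_{n+1})$-module structure of $\underline{V}^{\otimes r}$ explicit. From $L_q(\mathfrak{gl}_{n+1}) \cong \mathbf{U}_q(\mathfrak{gl}_n) \oplus \langle H_{n+1}^{\pm 1}\rangle$, the $H_{n+1}$-eigenspace decomposition is $\underline{V}^{\otimes r} = \bigoplus_{l} \underline{V}^{\otimes r}_l$, and each $\underline{V}^{\otimes r}_l$ splits further as $\bigoplus_{|I|=l} \underline{V}^{\otimes r}_I \cong V^{\otimes l} \otimes \mathbb{Q}(q)^{\binom{r}{l}}$ as a $\mathbf{U}_q(\mathfrak{gl}_n)$-module with trivial action on the multiplicity factor (since $\mathbf{U}_q(\mathfrak{gl}_n)$ acts trivially on each $\eta_{n+1}$). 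Applying q-Schur duality (Theorem~\ref{q duality}) to each factor $V^{\otimes l}$ yields
\[
End_{L_q(\mathfrak{gl}_{n+1})}(\underline{V}^{\otimes r}) \;=\; \bigoplus_{l=0}^{r} M_{\binom{r}{l}}\bigl(\Psi^V_l(\mathbf{H}(\mathfrak{S}_l))\bigr).
\]
The elements $\Xi(x_\sigma^{(l)}) = \psi_\sigma^{\underline{l}}$ already fill the ``diagonal'' block $(\underline{l},\underline{l})$ with $\Psi^V_l(\mathbf{H}(\mathfrak{S}_l))$. To reach an arbitrary off-diagonal block $(J,I)$ I would form compositions $\Psi(\omega_J^{-1}) \circ \psi_\sigma^{\underline{l}} \circ \Psi(\omega_I)$, each of which lies in $\Xi(\mathfrak{H}\kern -.6em\mathfrak{H}_r)$ because the transport operator $\omega_I\in\mathbf{H}(\mathfrak{S}_r)$ is a product of ${\rm T}_{s_i}$'s, and these are invertible via the Hecke quadratic relation $({\rm T}_{s_i}+q)({\rm T}_{s_i}-q^{-1})=0$.

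Once $\mathcal{D}(n,r) = End_{L_q(\mathfrak{gl}_{n+1})}(\underline{V}^{\otimes r})$ is in hand, the companion equation $End_{\mathcal{D}(n,r)}(\underline{V}^{\otimes r}) = \Phi(L_q(\mathfrak{gl}_{n+1}))$ follows for free from the classical double centralizer theorem, given that $L_q(\mathfrak{gl}_{n+1})$ acts semisimply on $\underline{V}^{\otimes r}$. The principal obstacle I anticipate is the surjectivity in the matrix identification: although $\Psi(\omega_I)$ does transport $\underline{V}^{\otimes r}_I$ onto $\underline{V}^{\otimes r}_{\underline{l}}$, its action on the other blocks $\underline{V}^{\otimes r}_K$ with $|K|=l$ is governed by the Hecke rule $M_f T_{s_i} = M_{s_i f} + (q^{-1}-q) M_f$ when $f(i)>f(i+1)$, producing nonzero correction terms on neighbouring blocks. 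A downward induction on the length of $I$, showing that these corrections always live on strictly lower-length blocks and can be inverted out, is where the main combinatorial work lies in verifying that the compositions above exhaust the full matrix block algebra.
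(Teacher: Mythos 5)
Your route is genuinely different from the paper's. The paper proves the two equalities separately: for $End_{\mathcal{D}(n,r)}(\underline{V}^{\otimes r})=\Phi(L_q(\mathfrak{gl}_{n+1}))$ it argues directly, first placing any $\phi$ inside $\Phi(\mathbf{U}_q(\mathfrak{gl}_{n+1}))$ via ordinary q-Schur duality (using only that $\mathbf{H}(\mathfrak{S}_r)\subseteq\mathfrak{H}\kern -.6em\mathfrak{H}_r$), then cutting $\phi=\sum_l\phi_l$ according to the blocks $\underline{V}^{\otimes r}_l$, identifying $\phi_l$ on the principal block $\underline{V}^{\otimes r}_{\underline{l}}$ with some $\Phi(g_l)$, transporting by $\omega_I$, and finally realizing the projection onto $\underline{V}^{\otimes r}_l$ inside $\Phi(L_q(\mathfrak{gl}_{n+1}))$ by the Lagrange interpolation element $G_l$ in $H_{n+1}$; the second equality is then obtained from the decomposition of $\underline{V}^{\otimes r}$ into the irreducible $\mathfrak{H}\kern -.6em\mathfrak{H}_r$-modules $D^\lambda_l$ of Lemma \ref{keylemma} plus the abstract duality theorem. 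You instead compute $End_{L_q(\mathfrak{gl}_{n+1})}(\underline{V}^{\otimes r})$ once and for all as $\bigoplus_{l}M_{\binom{r}{l}}\bigl(\Psi^V_l(\mathbf{H}(\mathfrak{S}_l))\bigr)$, identify this with $\mathcal{D}(n,r)$, and obtain the companion equality for free from the double centralizer theorem. This buys an explicit Wedderburn-type description of $\mathcal{D}(n,r)$ that the paper never writes down and eliminates the $G_l$ trick; your computation of $End_{L_q(\mathfrak{gl}_{n+1})}$ is correct, since $H_{n+1}$ separates the $\underline{V}^{\otimes r}_l$ by the distinct eigenvalues $q^{r-l}$ and $\mathbf{U}_q(\mathfrak{gl}_n)$ acts on $\underline{V}^{\otimes r}_l\cong V^{\otimes l}\otimes\mathbb{Q}(q)^{\binom{r}{l}}$ through the first factor only.

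The step you have not actually proved is the inclusion $\bigoplus_{l}M_{\binom{r}{l}}\bigl(\Psi^V_l(\mathbf{H}(\mathfrak{S}_l))\bigr)\subseteq\mathcal{D}(n,r)$, and this is not routine: as you yourself note, $\Psi(\omega_I)$ need not map the blocks $\underline{V}^{\otimes r}_K$ with $K\neq I$, $\#K=l$, away from $\underline{V}^{\otimes r}_{\underline{l}}$, so $\Psi(\omega_J^{-1})\circ\psi_\sigma^{\underline{l}}\circ\Psi(\omega_I)$ is not on the nose the matrix unit from column $I$ to row $J$ tensored with $\Psi^V_l(T_\sigma)$; the ``downward induction'' that would clean this up is named but not carried out. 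That missing induction is exactly the content of the paper's Lemma \ref{keylemma}: the simple modules of your matrix algebra are the $D^\lambda_l\cong\mathbb{Q}(q)^{\binom{r}{l}}\otimes S_q^{\lambda}$, and since both algebras are semisimple for $q$ transcendental, your inclusion of algebras is an equality if and only if each $D^\lambda_l$ remains irreducible over the subalgebra $\mathcal{D}(n,r)$ --- which the paper establishes by showing that any nonzero $\Theta=\sum_I\Theta_I$ has some component $\Theta_{I'}$ extractable by $\psi_{{\rm id}}^{I'}$ and then transportable back to ${S'}_q^{\lambda}$. So the proposal is viable, but to be complete you must either carry out the triangularity argument or substitute an irreducibility argument of the type in Lemma \ref{keylemma}; the remaining ingredients (eigenvalue separation by $H_{n+1}$, semisimplicity, and the double centralizer theorem for the companion equality) are sound.
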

We prove theorem \ref{main} separately.
\begin{thm}\label{young}
	Keep the notations as above (in particular, $q$ is transcendental over $\mathbb{Q}$). Then we have 
	$${\rm End}_{\mathcal{D}(n,r)}(\underline{V}^{\otimes r})=\Phi(L_q(\mathfrak{gl}_{n+1})).$$
\end{thm}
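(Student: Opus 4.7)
The inclusion $\Phi(L_q(\mathfrak{gl}_{n+1}))\subseteq End_{\mathcal{D}(n,r)}(\underline{V}^{\otimes r})$ is the easy direction. For $g\in \mathbf{U}_q(\mathfrak{gl}_n)\subseteq L_q(\mathfrak{gl}_{n+1})$, Lemma~\ref{l rep of qha}(3) already supplies commutation with each $\Xi_l(\mathfrak{H}\kern -.6em\mathfrak{H}^l_r)$, and $\Phi(g)$ preserves every block $\underline{V}_l^{\otimes r}$ because $\mathbf{U}_q(\mathfrak{gl}_n)$ fixes $\eta_{n+1}$. For the extra generator $H_{n+1}^{\pm 1}$, the block $\underline{V}_l^{\otimes r}$ is precisely the $H_{n+1}$-weight space of weight $r-l$, so $\Phi(H_{n+1}^{\pm 1})$ acts there as the scalar $q^{\pm(r-l)}$ and hence commutes with every operator respecting this decomposition, in particular with all of $\mathcal{D}(n,r)$.

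For the reverse inclusion, take $\phi\in End_{\mathcal{D}(n,r)}(\underline{V}^{\otimes r})$ and argue in four stages. \textbf{(i)} Since $\mathcal{D}(n,r)\supseteq \Psi(\mathbf{H}(\mathfrak{S}_r))$, Theorem~\ref{q duality} applied to $\underline{V}$ as the natural $\mathbf{U}_q(\mathfrak{gl}_{n+1})$-module yields $\phi=\Phi(u)$ for some $u\in \mathbf{U}_q(\mathfrak{gl}_{n+1})$. \textbf{(ii)} The operator $\Xi(x_e^{(l)})$ is the projection onto $\underline{V}_{\underline{l}}^{\otimes r}$; conjugating by the $\Psi(\omega_I)$ for $I$ an $l$-subset produces, up to the $q$-factors of \eqref{qha6}--\eqref{qha8}, the projections onto the remaining $\underline{V}_I^{\otimes r}$, and summing over $I$ exhibits the projection $\underline{V}^{\otimes r}\twoheadrightarrow \underline{V}_l^{\otimes r}$ inside $\mathcal{D}(n,r)$, so $\phi$ must preserve each block. \textbf{(iii)} Identify $\underline{V}_l^{\otimes r}\simeq V^{\otimes l}\otimes W_l$ as an $L_q$-module, where $W_l:=\bigoplus_{|I|=l}\mathbb{Q}(q)\,e_I$ is a trivial $L_q$-factor indexed by the $l$-subsets of $\{1,\dots,r\}$. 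The combinations $\sum_I \psi_\sigma^I$ supply $\Psi^V_l(\mathbf{H}(\mathfrak{S}_l))\otimes \id_{W_l}$ inside $\mathcal{D}(n,r)|_{\underline{V}_l^{\otimes r}}$, while products $\Psi(T_w)\Xi(x_e^{(l)})\Psi(T_{w^{-1}})$ with $w$ permuting only $\eta_{n+1}$-slots yield the transfer operators $\id_{V^{\otimes l}}\otimes e_I e_J^{*}$; together these generate $\Psi^V_l(\mathbf{H}(\mathfrak{S}_l))\otimes End_{\mathbb{Q}(q)}(W_l)$, whose commutant in $End(V^{\otimes l}\otimes W_l)$ is $End_{\mathbf{H}(\mathfrak{S}_l)}(V^{\otimes l})\otimes \id_{W_l}$. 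The classical q-Schur duality on $V^{\otimes l}$ identifies this commutant with $\Phi(L_q(\mathfrak{gl}_{n+1}))|_{\underline{V}_l^{\otimes r}}$, so for every $l$ there exists $v_l\in L_q(\mathfrak{gl}_{n+1})$ with $\phi|_{\underline{V}_l^{\otimes r}}=\Phi(v_l)|_{\underline{V}_l^{\otimes r}}$. \textbf{(iv)} Because $H_{n+1}$ is central in $L_q(\mathfrak{gl}_{n+1})$ and its eigenvalues $q^{r-l}$ on the different blocks are pairwise distinct (the transcendence of $q$ is used here), the Lagrange-interpolation polynomials $p_l(H_{n+1})\in L_q(\mathfrak{gl}_{n+1})$ serve as block projections, and $v:=\sum_l p_l(H_{n+1})\,v_l\in L_q(\mathfrak{gl}_{n+1})$ satisfies $\Phi(v)=\phi$.

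The genuine technical content is the lower bound $\mathcal{D}(n,r)|_{\underline{V}_l^{\otimes r}}\supseteq \Psi^V_l(\mathbf{H}(\mathfrak{S}_l))\otimes End(W_l)$ in stage~\textbf{(iii)}: one must track the $q$-corrections that appear when the $T_{s_i}$ are moved past $\Xi(x_e^{(l)})$ via relations \eqref{qha6}--\eqref{qha8}, and verify that linear combinations of the resulting conjugates together with the Hecke operators truly exhaust every matrix unit on the $W_l$-factor. Once this is secured, the commutant computation is a direct application of the q-Schur duality already in the paper, and the Lagrange assembly of stage~\textbf{(iv)} is then formal.
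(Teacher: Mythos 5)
Your overall skeleton coincides with the paper's: the easy inclusion via Lemma \ref{l rep of qha}(3) together with the scalar action of $H_{n+1}^{\pm1}$ on each $\underline{V}^{\otimes r}_l$; then $\phi=\Phi(u)$ by q-Schur duality for $\mathbf{U}_q(\mathfrak{gl}_{n+1})$; then a blockwise identification with some $\Phi(g_l)$, $g_l\in \mathbf{U}_q(\mathfrak{gl}_n)$; and finally the Lagrange interpolation in $H_{n+1}$ (your $p_l(H_{n+1})$ is exactly the paper's element $G_l$, and this is where transcendence of $q$ enters in both arguments). So stages (i) and (iv) match the paper essentially verbatim.

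The divergence, and the genuine gap, is your stage (iii) (and the projection claim in stage (ii) on which it leans). You assert that conjugates $\Psi(T_w)\,\Xi(x_e^{(l)})\,\Psi(T_{w^{-1}})$ produce the matrix units $\mathrm{id}_{V^{\otimes l}}\otimes e_Ie_J^{*}$ on the multiplicity space $W_l$, so that $\mathcal{D}(n,r)|_{\underline{V}^{\otimes r}_l}\supseteq \Psi^V_l(\mathbf{H}(\mathfrak{S}_l))\otimes End(W_l)$. This is exactly the step you flag as ``the genuine technical content'' and do not carry out, and it is not routine: $\Psi(T_w)$ is not a permutation operator, and whenever an $\eta_{n+1}$ is moved past a $v_j$ in the wrong order the relation $M_fT_{s_i}=M_{s_if}+(q^{-1}-q)M_f$ produces a tail supported on the original index set, so the conjugates are only upper-triangular perturbations of the desired $e_Ie_J^{*}$ (and $T_{w^{-1}}\neq T_w^{-1}$ in $\mathbf{H}(\mathfrak{S}_r)$, so ``conjugation'' itself needs care). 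One would need an ordering/triangularity induction to recover the actual matrix units, and similarly to show that the sum of conjugates is the block projection onto $\underline{V}^{\otimes r}_l$. The paper sidesteps all of this: it never needs $End(W_l)$ inside $\mathcal{D}(n,r)$. It only uses that $\phi$ commutes with the single idempotent $\psi_{\rm id}^{\underline{l}}=\Xi(x_e^{(l)})$ (forcing $\phi_l$ to stabilise $\underline{V}^{\otimes r}_{\underline{l}}$), that $\phi_l|_{\underline{V}^{\otimes r}_{\underline{l}}}$ commutes with all $\psi_\sigma^{\underline{l}}$ and hence equals some $\Phi(g_l)$ there by classical q-Schur duality on $V^{\otimes l}$, and that commutation with the intertwiners $\Psi(\omega_I)$ then determines $\phi_l$ on every other $\underline{V}^{\otimes r}_I$ and forces it to agree with $\Phi(g_l)$ there too. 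If you replace your stage (iii) by this lighter transport argument, your proof closes; as written, the lower bound on $\mathcal{D}(n,r)|_{\underline{V}^{\otimes r}_l}$ remains an unproved (and harder than necessary) claim.
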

\begin{proof}
	From lemma \ref{l rep of qha}(3), it is easy to know $\Phi(L_q(\mathfrak{gl}_{n+1}))\subseteq {\rm End}_{\mathcal{D}(n,r)}(\underline{V}^{\otimes r})$. We need to show the opposite inclusion.
	
	Note that $\mathcal{D}(n,r)=\Xi(\mathfrak{H}\kern -.6em\mathfrak{H}_r)$. So ${\rm End}_{\mathcal{D}(n,r)}(\underline{V}^{\otimes r})\subseteq {\rm End}_{\mathbf{H}(\mathfrak{S}_r)}(\underline{V}^{\otimes r})=\Phi(\mathbf{U}_q(\mathfrak{gl}_{n+1}))$.	
	For any $\phi\in \mathrm{End}_{\mathcal{D}(n,r)}(\underline{V}^{\otimes r})$, we have $\phi=\sum_{\lambda,\mu\in\Lambda(n+1,r)}\alpha_{\lambda\mu w}\zeta^w_{\lambda,\mu}$. More precisely, $\phi=\sum_{l=0}^{r}\phi_l$ with $\phi_l=\displaystyle\sum_{\lambda,\mu\in\Lambda(n+1,r), \mu_{n+1}=r-l}\alpha_{\lambda\mu w}\zeta^w_{\lambda,\mu}$. We have $\phi|_{\underline{V}_{\underline{l}}^{\otimes r}}=\phi_l$ by Prop \ref{property}(3).
	
	We claim that $\phi_l$ stabilizes $\underline{V}_{\underline{l}}^{\otimes r}$.
	
    Suppose $\xi_j\in\underline{V}_{\underline{l}}^{\otimes r}, \phi_l(\xi_j)=\xi_l+\xi_k$, with $\xi_l\in\underline{V}_{\underline{l}}^{\otimes r}, \xi_k\notin\underline{V}_{\underline{l}}^{\otimes r}$. Then we have
	$$\psi_{{\rm id}}^{\underline{l}}(\xi_l+\xi_k)=\psi_{{\rm id}}^{\underline{l}}\phi(\xi_j)=\phi \psi_{{\rm id}}^{\underline{l}}(\xi_j)=\xi_l+\xi_k.$$	
	Note that $\psi_{{\rm id}}^{\underline{l}}(\xi_l)=\xi_l$ and $\psi_{{\rm id}}^{\underline{l}}(\xi_k)=0$, which deduce that $\xi_k=0$. Hence $\phi_l$ stabilizes  $\underline{V}_{\underline{l}}^{\otimes r}$.
	
	For any $\underline{V}_I^{\otimes r}$ where $\#I=l$. We have $\omega_I\underline{V}_I^{\otimes r}\simeq\underline{V}_{\underline{l}}^{\otimes r}$. Hence 
	$$\phi(\underline{V}_I^{\otimes r})=\phi(\omega_I^{-1}\underline{V}_{\underline{l}}^{\otimes r})=\omega_I^{-1}\phi(\underline{V}_{\underline{l}}^{\otimes r})\subset \omega_I^{-1}\underline{V}_{\underline{l}}=\underline{V}_I^{\otimes r}.$$
  	Then $\phi$ stabilizes all $\underline{V}_I^{\otimes r}$ such that $\#I=l$, also for $\underline{V}_{\underline{l}}^{\otimes r}$ as desired.
	
	 Notice $\underline{V}_{\underline{l}}^{\otimes r}=V^{\otimes l}\otimes\eta_{n+1}^{\otimes r-l}$ and  $\psi_\sigma^{\underline{l}}=\Psi^V_l(\sigma)\otimes {\rm id}^{\otimes r-l}$.
	    We have  $\phi_l|_{\underline{V}_{\underline{l}}^{\otimes r}}\in\Phi(U_q(\mathfrak{gl}_n))\subset\Phi(L_q(\mathfrak{gl}_{n+1}))$ due to classical quantum $q$-Schur duality. So we can suppose  $\phi_l|_{\underline{V}_{\underline{l}}^{\otimes r}}=\Phi(g_l)|_{\underline{V}_{\underline{l}}^{\otimes r}}$, where $g_l\in L_q(\mathfrak{gl}_{n+1})$.
	  
	We have $\omega_I\phi_l(\underline{V}_I^{\otimes r})=\phi_l\omega_I(\underline{V}_I^{\otimes r})=\Phi(g_l)\omega_I(\underline{V}_I^{\otimes r})$. Since  $\Phi(L_q(\mathfrak{gl}_{n+1}))\subset {\rm End}_{D(n,r)}(\underline{V}^{\otimes r})$, $\Phi(g_l)$ commutes with $\omega_I$. Hence
	\[\phi_l(\underline{V}_I^{\otimes r})=\omega_I^ {-1}\phi_l\omega_I(\underline{V}_I^{\otimes r})=\omega_I^{-1}\Phi(g_l)\omega_I(\underline{V}_I^{\otimes r})=\Phi(g_l)(\underline{V}_I^{\otimes r}).\]
	In other words, $\phi_l|_{\underline{V}_l^{\otimes r}}\in\Phi(L_q(\mathfrak{gl}_{n+1}))|_{\underline{V}_l^{\otimes r}}$.
	
	Next we prove that $\phi\in \Phi(L_q(\mathfrak{gl}_{n+1}))$ for any $\phi\in {\rm End}_{\mathcal{D}(n,r)}(\underline{V}^{\otimes r})$.
	
	Consider the element $G_l=\frac{(H_{n+1}-1)\cdots\widehat{(H_{n+1}-q^{r-l})}\cdots(H_{n+1}-q^r)}{(q^{r-l}-1)\cdots\widehat{(q^{r-l}-q^{r-l})}\cdots(q^{r-l}-q^r)}$, where  $\widehat{a}$ means ignoring this term. Then the action of $G_l$ is:
	$$G_l.x=\frac{(q^{r-l}-1)\cdots\widehat{(q^{r-l}-q^{r-l})}\cdots(q^{r-l}-q^r)}{(q^{r-l}-1)\cdots\widehat{(q^{r-l}-q^{r-l})}\cdots(q^{r-l}-q^r)}x=x, \forall x\in\underline{V}_l^{\otimes r};$$
	$$G_l.\underline{V}_k^{\otimes r}=0\ (k\neq l),\text{ since }(H_{n+1}-q^{r-k}).\underline{V}_k^{\otimes r}=0.$$
	Then we have
	$$\phi(g_l)\Phi(G_l). \underline{V}_l^{\otimes r}=\phi(g_l). \underline{V}_l^{\otimes r};$$
	$$\phi(g_l)\Phi(G_l).\underline{V}_k^{\otimes r}=0\ (k\neq l).$$
	Thus 
	$\phi_l=\Phi(g_l\circ G_l)$ and $\phi=\sum\limits_l\Phi(g_l\circ G_l)\in\Phi(L_q(\mathfrak{gl}_{n+1}))$.
\end{proof}

We now prepare for the remaining part of Theorem \ref{main}. Notice that we have $\underline{V}^{\otimes r}_{\underline{l}}\simeq V^{\otimes l}$, which can be regarded as a $(\mathbf{U}_q(\mathfrak{gl}_n),\mathbf{H}(\mathfrak{S} _l))$-module. 
Due to quantum $q$-Schur duality, we have $\underline{V}^{\otimes r}_{\underline{l}}\simeq V^{\otimes l}\simeq \bigoplus_{\lambda\in P{(n,l)}}L_q(\lambda)\otimes S_q^{\lambda}$ follows from Theorem \ref{q duality}(2). Hence we have 
$$\underline{V}^{\otimes r}_{\underline{l}}\simeq V^{\otimes l}\simeq\bigoplus_{\lambda\in P{(n,l)}}\overbrace{(S_q^{\lambda}\oplus S_q^{\lambda}\cdots\oplus S_q^{\lambda})}^{\mathrm{dim}(L_q(\lambda))}.$$
We thereby obtain an irreducible decomposition of the $\mathbf{H}(\mathfrak{S} _l)$-modules $\underline{V}^{\otimes r}_{\underline{l}}$.
The image of $S_q^{\lambda}$ in $\underline{V}^{\otimes r}_{\underline{l}}$ is
$${S'}_q^{\lambda}:=S_q^ {\lambda}\otimes\overbrace{\eta_{n+1}\otimes\cdots\otimes\eta_{n+1}}^{r-l}.$$
We consider the space $D^\lambda_l:=\sum_{I}\omega_I^{-1}(S')_q^{\lambda}$ with $\#I=l$  (recall that $\omega_I\in\mathfrak{S}_r$ satisfies  $\Psi(\omega_I)\underline{V}^{\otimes r}_{I}\subset\underline{V}^{\otimes r}_{\underline{l}}$).

\begin{lem}\label{keylemma}
	$D^\lambda_l$ is an irreducible $\mathfrak{H}\kern -.6em\mathfrak{H}_r^l$-module. In particular, it is an  irreducible $\mathfrak{H}\kern -.6em\mathfrak{H}_r$-module.
\end{lem}
\begin{proof}	
	Firstly, we show $D^\lambda_l$ is a $\mathfrak{H}\kern -.6em\mathfrak{H}_r^l$-module. Note that the generator of $\mathfrak{H}\kern -.6em\mathfrak{H}_r^l$ are $T_{s_i}$ and $x_\sigma$, where $1\leq s\leq r$ and $x_\sigma\in \mathfrak{S}_l$. From the natural representation, $x_\sigma$ only doesn't annihilate $\underline{V}^{\otimes r}_{\underline{l}}$ and obviously stabilizes $D^\lambda_l$. Note that $\omega_I$ is generated by $T_{s_i}$ and thus $D^\lambda_l$ stabilized by all $T_{s_i}$. We conclude that $D^\lambda_l$ is a $\mathfrak{H}\kern -.6em\mathfrak{H}_r^l$-module. The irreducibility is immediate from the definition. 
   
\end{proof}

\begin{prop}
	Suppose $q$ is transcendental over $\mathbb{Q}$. We have 
	$$\underline{V}^{\otimes r}_l\simeq\bigoplus_{\lambda\in P(n,l)} D_l^\lambda\otimes L_q(\lambda)$$
	as a $(\mathfrak{H}\kern -.6em\mathfrak{H}_r,L_q(\mathfrak{gl}_{n+1}))$-module.
\end{prop}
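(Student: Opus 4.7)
The plan is to bootstrap from classical q-Schur duality on the distinguished subspace $\underline{V}^{\otimes r}_{\underline{l}}=V^{\otimes l}\otimes\eta_{n+1}^{\otimes(r-l)}$ and then transport the resulting decomposition across all $\underline{V}^{\otimes r}_I$ via the intertwiners $\omega_I$. Concretely, since $q$ is transcendental over $\mathbb{Q}$, Theorem~\ref{q duality} gives a $(\mathbf{U}_q(\mathfrak{gl}_n),\mathbf{H}(\mathfrak{S}_l))$-bimodule decomposition $V^{\otimes l}\simeq\bigoplus_{\lambda\in P(n,l)}L_q(\lambda)\otimes S_q^\lambda$. Tensoring with $\eta_{n+1}^{\otimes(r-l)}$ yields $\underline{V}^{\otimes r}_{\underline{l}}\simeq\bigoplus_\lambda L_q(\lambda)\otimes {S'}^{\lambda}_q$. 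The extra generator $H_{n+1}$ of $L_q(\mathfrak{gl}_{n+1})$ acts on $\underline{V}^{\otimes r}_l$ by the scalar $q^{r-l}$ (counting copies of $\eta_{n+1}$), and the other generators lie in $\mathbf{U}_q(\mathfrak{gl}_n)$, so this already promotes to a $(L_q(\mathfrak{gl}_{n+1}),\mathbf{H}(\mathfrak{S}_l))$-bimodule identification.

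Next I would propagate this to every $\underline{V}^{\otimes r}_I$ with $\#I=l$. By construction $\omega_I\in\mathbf{H}(\mathfrak{S}_r)$ is an isomorphism $\underline{V}^{\otimes r}_{\underline{l}}\xrightarrow{\sim}\underline{V}^{\otimes r}_{I}$, and by Theorem~\ref{q duality} at level $r$ it commutes with every element of $\Phi(L_q(\mathfrak{gl}_{n+1}))\subset\Phi(\mathbf{U}_q(\mathfrak{gl}_{n+1}))$. Therefore
\[
\underline{V}^{\otimes r}_I\;\simeq\;\bigoplus_{\lambda\in P(n,l)}L_q(\lambda)\otimes\omega_I^{-1}({S'}^{\lambda}_q)
\]
as $L_q(\mathfrak{gl}_{n+1})$-modules. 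Because the $\underline{V}^{\otimes r}_I$ are linearly independent direct summands of $\underline{V}^{\otimes r}_l$, the sum $\sum_I\omega_I^{-1}({S'}^\lambda_q)$ is in fact direct, so $D_l^\lambda=\bigoplus_{\#I=l}\omega_I^{-1}({S'}^\lambda_q)$ as a vector space. Summing over $I$ therefore gives the vector-space isomorphism
\[
\underline{V}^{\otimes r}_l\;=\;\bigoplus_{\#I=l}\underline{V}^{\otimes r}_I\;\simeq\;\bigoplus_{\lambda\in P(n,l)}L_q(\lambda)\otimes D_l^\lambda.
\]

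To upgrade this to a $(\mathfrak{H}\kern -.6em\mathfrak{H}_r,L_q(\mathfrak{gl}_{n+1}))$-bimodule isomorphism, I would verify that both actions respect the decomposition. The $L_q(\mathfrak{gl}_{n+1})$-action is transparent: it is trivial on the $D_l^\lambda$ factor (which is assembled from Hecke-type operators) and yields the irreducible $L_q(\lambda)$ on the other factor, by q-Schur duality together with the $H_{n+1}$-scalar identification above. For the $\mathfrak{H}\kern -.6em\mathfrak{H}_r$-action, Lemma~\ref{l rep of qha} shows that $\Xi(T_{s_i})$ acts through $\Psi$ and each $\Xi(x^{(l)}_\sigma)$ acts as $\psi_\sigma^{\underline{l}}$ on $\underline{V}^{\otimes r}_{\underline{l}}$ (and the $\omega_I$-conjugates on other $\underline{V}^{\otimes r}_I$), annihilating all other rank strata. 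These formulas precisely implement on $D_l^\lambda$ the irreducible $\mathfrak{H}\kern -.6em\mathfrak{H}_r^l$-action of Lemma~\ref{keylemma}, while commuting with $L_q(\mathfrak{gl}_{n+1})$ by part (3) of Lemma~\ref{l rep of qha}; so the two actions split across the tensor factors.

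The main obstacle, I expect, is the bookkeeping that certifies the two descriptions of the $\mathfrak{H}\kern -.6em\mathfrak{H}_r$-action agree on the right-hand side. Concretely, one needs to check that the conjugates $\omega_I^{-1}\circ\psi_\sigma^{\underline{l}}\circ\omega_I$ defining the action on $\omega_I^{-1}({S'}^\lambda_q)$, together with the translations produced by $\Psi(T_{s_i})$ moving between different $\underline{V}^{\otimes r}_I$, are exactly the operators used in the proof of Lemma~\ref{keylemma} to show $D_l^\lambda$ is irreducible. Once that matches, irreducibility of each $D_l^\lambda$ as a $\mathfrak{H}\kern -.6em\mathfrak{H}_r^l$-module and irreducibility of each $L_q(\lambda)$ as an $L_q(\mathfrak{gl}_{n+1})$-module (via classical q-Schur duality together with the $H_{n+1}$-eigenvalue $q^{r-l}$ distinguishing rank strata) promote the vector-space decomposition to the claimed bimodule isomorphism, and the Hecke-algebra relations that might have identified distinct $\lambda$ are ruled out by the pairwise non-isomorphism of the $L_q(\lambda)$.
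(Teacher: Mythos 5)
Your argument is correct and follows essentially the same route as the paper's (much terser) proof: decompose $\underline{V}^{\otimes r}_{\underline{l}}$ via classical q-Schur duality on $V^{\otimes l}$, transport the Specht constituents across the strata $\underline{V}^{\otimes r}_I$ by the intertwiners $\omega_I$ to assemble the $D_l^\lambda$, and invoke the irreducibility of $D_l^\lambda$ (Lemma \ref{keylemma}) and of $L_q(\lambda)$ to promote the vector-space decomposition to a bimodule one. Your version simply makes explicit the details (the $H_{n+1}$-eigenvalue $q^{r-l}$, the directness of $\sum_I\omega_I^{-1}({S'}^\lambda_q)$, the compatibility of the two descriptions of the $\mathfrak{H}\kern -.6em\mathfrak{H}_r$-action) that the paper leaves implicit.
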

\begin{proof}
	We can easily get $\underline{V}^{\otimes r}_l\simeq \sum_I \omega_I^{-1}\underline{V}^{\otimes r}_I$ from the action of $\mathfrak{H}\kern -.6em\mathfrak{H}_r^l$.
	 Note that $L_q(\lambda)$ is an irrecudible $L_q(\mathfrak{gl}_{n+1})$-module. Thus we have desried decomposition of  $\underline{V}^{\otimes r}_l$ as a $(\mathfrak{H}\kern -.6em\mathfrak{H}_r,L_q(\mathfrak{gl}_{n+1}))$-module.
\end{proof}

\begin{coro}
     Suppose $q$ is transcendental over $\mathbb{Q}$. Then we have 
	$${\rm End}_{L_q(\mathfrak{gl}_{n+1})}(\underline{V}^{\otimes r})=\mathcal{D}(n,r).$$
\end{coro}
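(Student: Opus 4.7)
The plan is to derive the corollary from the bimodule decomposition in the preceding proposition by a standard double centralizer argument. Summing over $l=0,1,\ldots,r$ yields the $(\mathfrak{H}\kern -.6em\mathfrak{H}_r,L_q(\mathfrak{gl}_{n+1}))$-bimodule isomorphism
$$\underline{V}^{\otimes r}\simeq\bigoplus_{l=0}^{r}\bigoplus_{\lambda\in P(n,l)} D_l^\lambda\otimes L_q(\lambda).$$
First I would verify that the constituents $L_q(\lambda)$, indexed by pairs $(l,\lambda)$, are pairwise non-isomorphic as $L_q(\mathfrak{gl}_{n+1})$-modules: for the same $l$ with $\lambda\neq\mu$ this is the standard classification of irreducible $\mathbf{U}_q(\mathfrak{gl}_n)$-modules, and for $l\neq l'$ the central-type generator $H_{n+1}$ acts on $\underline{V}^{\otimes r}_l$ by the scalar $q^{r-l}$, which is distinct from $q^{r-l'}$ since $q$ is transcendental over $\mathbb{Q}$.

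Given this distinctness, Schur's lemma applied to the decomposition gives
$$End_{L_q(\mathfrak{gl}_{n+1})}(\underline{V}^{\otimes r}) = \bigoplus_{l=0}^{r}\bigoplus_{\lambda\in P(n,l)} End_{\mathbb{Q}(q)}(D_l^\lambda).$$
To match this with $\mathcal{D}(n,r)$, I would first upgrade Lemma \ref{keylemma}: by relation (\ref{qha9}) together with clause (4.2) of Lemma \ref{l rep of qha}, the generators $x_\sigma^{(k)}$ with $k\neq l$ act as zero on $D_l^\lambda$, so $D_l^\lambda$ remains irreducible as a module over the full algebra $\mathfrak{H}\kern -.6em\mathfrak{H}_r$. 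Next I would check pairwise non-isomorphism of the $D_l^\lambda$: for $l\neq l'$, the element $x_{\mathrm{id}}^{(l)}$ acts non-trivially on $D_l^\lambda$ but as zero on $D_{l'}^{\lambda'}$; for the same $l$, the image of the subalgebra $X_l\cong\mathbf{H}(\mathfrak{S}_l)$ acting on $D_l^\lambda$ coincides with ${S'}_q^\lambda\cong S_q^\lambda$, so any isomorphism $D_l^\lambda\simeq D_l^\mu$ would force the Specht modules $S_q^\lambda$ and $S_q^\mu$ to agree, forcing $\lambda=\mu$.

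Since the $D_l^\lambda$ are pairwise non-isomorphic irreducible $\mathfrak{H}\kern -.6em\mathfrak{H}_r$-modules, the Jacobson density theorem forces $\mathcal{D}(n,r)$ to surject onto each $End_{\mathbb{Q}(q)}(D_l^\lambda)$, hence onto the whole direct sum; this delivers $End_{L_q(\mathfrak{gl}_{n+1})}(\underline{V}^{\otimes r})\subseteq \mathcal{D}(n,r)$. The reverse inclusion is already in place: Lemma \ref{l rep of qha}(3) handles the $\mathbf{U}_q(\mathfrak{gl}_n)$-part, while for $H_{n+1}^{\pm 1}$ it is enough to note that every $\Xi(f)$ preserves each $\underline{V}^{\otimes r}_l$, on which $H_{n+1}$ acts by a scalar and so commutes automatically.

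The step I expect to be the main obstacle is the pairwise non-isomorphism of the $D_l^\lambda$ when the partitions share the same $l$: in that case the modules all sit inside the same $\underline{V}^{\otimes r}_l$ and must be distinguished by the internal $X_l$-submodule structure on ${S'}_q^\lambda$ rather than by any coarse $\underline{n}$-rank data, so the argument has to exploit the classical irreducibility and classification of Specht modules for $\mathbf{H}(\mathfrak{S}_l)$ (which uses transcendence of $q$ for semisimplicity). Once that point is pinned down, the remainder of the proof is the routine double centralizer/density machinery combined with the commutation already supplied by Lemma \ref{l rep of qha}(3).
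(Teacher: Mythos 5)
Your proof is correct and takes essentially the same route as the paper: both decompose $\underline{V}^{\otimes r}$ into the irreducible $\mathfrak{H}\kern -.6em\mathfrak{H}_r$-modules $D_l^\lambda$ supplied by Lemma \ref{keylemma} and then invoke the classical double centralizer theorem. The only difference is one of detail --- the paper compresses your pairwise non-isomorphism checks, the density step, and the reverse inclusion into a single citation of ``classical duality theory'' in \cite{GW}, whereas you spell them out explicitly.
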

\begin{proof}
	From \ref{keylemma}, $D^\lambda_l$ is a irreducible $\mathfrak{H}\kern -.6em\mathfrak{H}_r^l$-module and also a irreducible $\mathfrak{H}\kern -.6em\mathfrak{H}_r$-module. So
	$$\underline{V}^{\otimes r}\simeq\bigoplus\limits_{l=0}^r\bigoplus_{\lambda\in P(n,l)}(D_l^\lambda)^{\bigoplus \mathrm{dim}(L_q(\lambda))}.$$
	is the decomposition of $\underline{V}^{\otimes r}$ into direct sum of irreducible $\mathfrak{H}\kern -.6em\mathfrak{H}_r$-modules. From classical duality theory (see \cite{GW} for details), we have  $${\rm End}_{L_q(\mathfrak{gl}_{n+1})}(\underline{V}^{\otimes r})=\mathcal{D}(n,r).$$
\end{proof}

\subsection*{Acknowledgements}
We are grateful to professor Tanisaki for his enlightenment of quantum group and professor Bin Shu for his suggestions and helps about this article.

\end{document}